\def\sp{\mathrm{Sp}}
\def\U{\mathcal{U}}
\def\F{\mathcal{F}}
\def\Q{\mathcal{Q}}
\def\ptl{\mathrm{PTL}}
\newcounter{sarrow}
\renewcommand{\leq}{\leqslant}
\renewcommand{\geq}{\geqslant}
\def\subsection{\@startsection{subsection}{3}%
  \z@{.5\linespacing\@plus.7\linespacing}{.3\linespacing}%
  {\bfseries\centering}}
\def\subsubsection{\@startsection{subsubsection}{3}%
  \z@{.5\linespacing\@plus.7\linespacing}{.3\linespacing}%
  {\centering}}
\def\myfnt{\ifx\protect\@typeset@protect\expandafter\footnote\else\expandafter\@gobble\fi}
\theoremstyle{definition}
\newtheorem{theorem}{Theorem}[section]
\newtheorem{definition}[theorem]{Definition}
\newtheorem{lemma}[theorem]{Lemma}
\newtheorem{corollary}[theorem]{Corollary}
\newtheorem{remark}[theorem]{Remark}
\newcounter{claimcounter}
\numberwithin{claimcounter}{theorem}
\newenvironment{claim}{\stepcounter{claimcounter}{\noindent {\bf Claim \theclaimcounter.}}}{}
\newenvironment{claimproof}[1]{\noindent{{\em Proof.}}\space#1}{\hfill $\rule{0.35em}{0.35em}$}
\newcommand{\pureindep}[1][]{%
  \mathrel{
    \mathop{
      \vcenter{
        \hbox{\oalign{\noalign{\kern-.3ex}\hfil$\vert$\hfil\cr
              \noalign{\kern-.7ex}
              $\smile$\cr\noalign{\kern-.3ex}}}
      }
    }\displaylimits_{#1}
  }
}
\newcommand{\indep}[2]{%
  \mathrel{
    \mathop{
      \vcenter{
        \hbox{%
\oalign{
\noalign{\kern-.3ex}\hfil$\vert$\hfil\cr
              \noalign{\kern-.7ex}
              $\smile$\cr\noalign{\kern-.3ex}
}
}
      }
}^{\!\!\!\!\!#2}_{\!\!\hspace{-0.1em}#1}
  }
}
\newcommand{\displayindep}[2]{%
  \mathrel{
    \mathop{
      \vcenter{
        \hbox{%
\oalign{
\noalign{\kern-.3ex}\hfil$\vert$\hfil\cr
              \noalign{\kern-.7ex}
              $\smile$\cr\noalign{\kern-.3ex}
}
}
      }
}^{\!\!\hspace{-0.1em}#2}_{\!\!\hspace{-0.1em}#1}
  }
}
\def\presuper#1#2%
\begin{document}

\title{Quantum Team Logic and Bell's Inequalities}
\thanks{The research of the second author was supported by the Finnish Academy of Science and Letters (Vilho, Yrj\"o and Kalle V\"ais\"al\"a foundation) and a grant TM-13-8847 of CIMO. The research of the third author was partially supported by
grant 251557 of the Academy of Finland.}

\author{Tapani Hyttinen}
\address{Department of Mathematics and Statistics,  University of Helsinki, Finland}

\author{Gianluca Paolini}
\address{Department of Mathematics and Statistics,  University of Helsinki, Finland}

\author{Jouko V\"a\"an\"anen}
\address{Department of Mathematics and Statistics,  University of Helsinki, Finland and Institute for Logic, Language and Computation, University of Amsterdam, The Netherlands}

\date{}
\maketitle

\begin{abstract}

A logical approach to Bell's Inequalities of quantum mechanics has been introduced by Abramsky and Hardy \cite{abramsky}.  We point out that the logical Bell's Inequalities of \cite{abramsky} are provable in the probability logic of Fagin, Halpern and Megiddo \cite{fagin}. Since it is now considered empirically established that quantum mechanics violates Bell's Inequalities, we introduce a modified probability logic, that we call quantum team logic, in which Bell's Inequalities are not provable, and prove a Completeness Theorem for this logic. For this end we generalise the team semantics of dependence logic \cite{MR2351449} first to probabilistic team semantics,  and then to what we call quantum team semantics.\end{abstract}


\section{Introduction}

Quantum logic was introduced by Birkhoff and von Neumann \cite{Birkhoff} to account for non-classical  phenomena in quantum physics.  Several other formulations have been suggested since.
Starting from   probabilistic propositional logic, which unsurprisingly turns  out to be inadequate for quantum physics, we   introduce here a new propositional logic,  called {\em quantum team logic}. The idea is to take advantage of some features of {\em team semantics} \cite{MR2351449} in order to model phenomena of quantum physics such as non-locality and entanglement. These phenomena were first emphasised by the famous paper of Einstein-Podolsky-Rosen  \cite{Einstein}, and then more conclusively by a result of J. S. Bell \cite{Bell}, known as Bell's Theorem.

In classical propositional logic the meaning of a sentence can be defined in terms of  truth-value assignments to the proposition symbols. In so-called {\em team semantics} of \cite{MR2351449}, the meaning of a sentence is defined in terms of sets of truth-value assignments, called {\em teams}. The advantage of this switch is that it becomes possible to define the meaning of a proposition symbol {\em depending on} or being {\em independent of} another proposition symbol. In this paper we do not discuss dependence or independence, but instead use team semantics to investigate the related concept of correlation of truth-values of proposition symbols in teams of assignments. In particular we use team semantics to define two different propositional logics. The first is a  propositional logic adequate for reasoning about expected truth values of propositional formulas. We show that Bell's Inequalities are provable in this logic. We then introduce another similar logic, quantum team logic, and show that the kind of Bell's Inequalities that can be violated, is not provable. The situation is a manifestation in logical terms of the recognised fact that  assigning probabilities to observables is not enough to explain correlations of entangled particles.   Both of our  logics extend and are based on \cite{fagin}.  Our approach is inspired by \cite{abramsky}.

An essential feature of quantum phenomena is that they are probabilistic. It is therefore natural in any attempt to model quantum physics by propositional logic to allow probabilistic truth-values. We accomplish this by considering {\em multi-teams}, that is, teams in which truth-value assignments occur with certain probabilities. This is our first step, and we call the resulting logic {\em probabilistic team logic}.

Multi-teams can be seen as results of experiments (the fact that the elements of the team take values from $\{0,1\}$ is not essential). E.g. someone throws a bowling ball at a rack of four pins. The result can be described by a function $f:\{0,1,2,3\}\to\{0,1\}$ ($f(i) = 1$ if pin $i$ is knocked down). When the experiment is repeated several times the results form a team. From this team one can calculate e.g. the probability of the event that either both pins $0$ and $1$ are knocked down or neither of them is knocked down. In our propositional logic this is the same as the (expected) truth value of the propositional formula $p_0 \leftrightarrow p_1$ in the multi-team.

The physical observations violating Bell's Inequalities, as well as quantum theoretic computations to the same effect, show that correlations between observations concerning entangled particles are stronger than can be explained by probabilities of individual (even hidden) variables. This leads us to define the more general concept of {\em quantum team}. Every multi-team is a quantum team but not conversely. Experiments demonstrating the violation of Bell's Inequalities give practical examples of quantum teams. 
Not all quantum teams correspond to quantum mechanical experiments because even so-called maximal violations of Bell's Inequalities can be manifested by quantum teams.

By giving the meaning of propositional symbols in terms of quantum teams we define {\em quantum team logic} and show that cases of the violation of Bell's Inequalities are simply examples of sentences of quantum team logic that are not valid. We give a proof system for our quantum team logic, based on \cite{fagin}, and prove a Completeness Theorem. We propose that our quantum team logic formalises probabilistic reasoning in quantum physics in perfect harmony with the non-locality phenomenon revealed by Bell's Inequalities. 
However, since quantum teams cover more than quantum mechanics, our quantum team logic does not formalize {\em exactly} the reasoning in quantum physics. Finding the logic for exact reasoning is left as an open problem.
	
	


\section{Notation}

We use $\omega$ to denote the set of natural numbers, $\omega^*$ to denote $\omega - \left\{ 0 \right\}$, and  $\mathcal{P}_{\omega}({\omega})$ to denote the set of non-empty finite subsets of $\omega$. 
We use $p_0, p_1, \ldots$ to denote proposition symbols. 
For a proposition symbol $p_i$ and $d\in 2$ we use $p_i^d$ to denote $p_i$, if $d=1$, and 
$\neg p_i$, if $d=0$.
We use the notation $( a_i )_{i < n}$
 for a sequence of $n$ elements $a_i$.
 
\section{Multi-teams}\label{prob_team}

A good source of teams for our purpose is the following Alice-Bob experiment:

\begin{itemize}
\item Alice has two registers $A_1$ and $A_2$ which both can contain a binary digit. 
\item Bob has two registers $B_1$ and $B_2$ which both can contain a binary digit.
\item The experiment consists of Alice and Bob both choosing one of their registers and reading the content, resulting in a tuple $(x_1,y_1,x_2,y_2)$, where $x_1\in\{A_1,A_2\}$, $y_1\in\{0,1\}$, $x_2\in\{B_1,B_2\}$ and $y_2\in\{0,1\}$. 

\end{itemize}
  
Each result $(x_1,y_1,x_2,y_2)$ of the Alice-Bob experiment can be thought of as an assignment of truth-values to proposition symbols $p_0,\ldots,p_3$ with the intention:

$$
\begin{array}{lcclcl}
p_0& \mbox{ is true} &\mbox{ iff }& \mbox{ Alice chose $A_1$}&\mbox{ i.e.}& \mbox{ $x_1=A_1$}\\
p_1& \mbox{ is true} &\mbox{ iff }& \mbox{ Alice read $1$}&\mbox{ i.e.}& \mbox{  $y_1=1$}\\
p_2& \mbox{ is true} &\mbox{ iff }& \mbox{ Bob chose $B_1$}&\mbox{ i.e.}& \mbox{  $x_2=B_1$}\\
p_3& \mbox{ is true} &\mbox{ iff }& \mbox{ Bob read $1$}&\mbox{ i.e.}& \mbox{  $y_2=1$}\\
\end{array}
$$	

A possible set of assignments arising in this way is in Table~\ref{team1}. Note that the table has  repeated rows, so we cannot identify the table with the {\em set} of the assignments constituting the table without losing some information. On the other hand, teams are {\em sets} of assignments. Thus  Figure~\ref{team1} does not represent a team in the sense of \cite{MR2351449}, but rather a {\em multi-team}.

\begin{figure}[h]
$$\begin{array}{|c|c|c|c|c|}
\hline
\phantom{a} & p_0 & p_1 & p_2 & p_3 \\
\hline
	0 & 1 & 1 & 0 & 1 \\
	1 & 1 & 1 & 1 & 1 \\
	2 & 1 & 1 & 1 & 1 \\
	3 & 1 & 1 & 1 & 0 \\
	4 & 0 & 0 & 1 & 1 \\
	5 & 0 & 0 & 0 & 0 \\
	6 & 0 & 0 & 0 & 0 \\
	7 & 0 & 0 & 0 & 0 \\	
\hline
\end{array}
$$\caption{Example of a multi-team  \label{team1}}\end{figure}

\def\dom{\mbox{dom}}
\begin{definition}[Multi-team]
A {\em {multi-team}} is a pair $X=(\Omega,\tau)$, where $\Omega$ is a non-empty set and $\tau$ is a function such that
   $\dom(\tau)=\Omega$ and if $i\in \Omega$, then  $\tau(i)$ is an assignment for one and the same non-empty set of proposition symbols, denoted by $\dom(X)$. The {\em size} of the multi-team is the cardinality $|\Omega|$ of $\Omega$.
\end{definition}
 Note, that an {ordinary team} $X$, i.e. a set of assignments, can be thought of as the {multi-team} $(\Omega,\tau)$, where $X=\Omega$ and $\tau(i)=i$ for all $i\in X$. 

A finite multi-team of size $n$ gives rise to the concept of a probability of an individual assignment:
$$P_X(v)=\frac{|\{i\in \Omega : \tau(i)=v\}|}{n}.$$ This extends canonically to a definition of  the probability (or expected value) of  a propositional formula  $\phi$:
		\[ [\phi]_X = P(\left\{ i \in \Omega \, | \, \tau(i) (\phi) = 1 \right\}) .\]
In fact, a finite multi-team is just a finite ordinary team $X$ endowed with a probability distribution on $X$. For infinite multi-teams the situation is a little different and calls for a new definition:

	\begin{definition}[Probability team] 
A {\em {probability team}} is a tuple $(\Omega,\F,P,\tau)$, where $\Omega$ is a set, $\F$ is a $\sigma$-algebra on $\Omega$, $P$ is a probability measure on $(\Omega,\F)$ and $\tau$ is a measurable function such that
   $\dom(\tau)=\Omega$ and if $i\in \Omega$, then  $\tau(i)$ is an assignment for one and the same set of proposition symbols, denoted by $\dom(X)$. 
\end{definition}

In this paper the main focus is on finite teams.



Suppose now $X = (\Omega,\tau)$ is a finite multi-team of size $n$ and $U\subseteq \dom(X)$. We can define a new multi-team $(\Omega,\tau_U)$ by letting $\tau_U(i)=\tau(i)\restriction U$. For each assignment $v$ on $U$ we define 
$$P_{X,U}(v)=\frac{|\{i\in \Omega : \tau_U(i)=v\}|}{n}.$$We write $P_{U}$ when $X$ is clear from the context. We can now make a table of the values $P_U(v)$ for various $U$ and $v$. 
For the multi-team of Figure~\ref{team1} and for $U=\{p_0,p_1\}$ we get  Table~\ref{prob_table0}. We have denoted the four possible assignments for $\{p_0,p_1\}$ as $(1,1),(0,1),(1,0)$ and $(0,0)$ with the obvious meaning. 
\begin{table}[h]
$$\begin{array}{|c|c|c|c|c|}
\hline
\phantom{a} & (1, 1) & (0,1) & (1,0) & (0,0) \\
\hline
	\{p_0, p_1\} & 1/2 & 0 & 0 & 1/2 \\
\hline
\end{array}
$$\caption{Example of a probability table \label{prob_table0}}\end{table}

It is relevant from the point of view of multi-teams arising is quantum physical experiments to consider a whole collection of subsets $U$ of $\dom(X)$ at the same time. We call a collection $\U=\{U_j : j\in J\}$ of subsets of $\dom(X)$ a {\em cover of $B$} if $\bigcup_{j\in J}U_j=B$. For two collections $\U$ and $\U'$ of sets we define $$\U\le\U'\iff\forall U\in\U\exists U'\in\U'(U\subseteq U').$$ 

\begin{definition}[Probability Table \cite{abramsky}\footnote{In \cite{abramsky} what we call probability tables are referred to as {\em probability models}.}]\label{def_prob_table}
Suppose $B$ is a finite set of proposition symbols and $\U$ a cover of $B$. A {\em probability table 	
for $B$ and $\U$} is a function $U\mapsto d_U$ on $\U$, where $d_U$ is a  probability distribution on the possible truth-value assignments $s$ for the proposition symbols in $U$ (i.e. $d_U(s)\in[0,1]$ and $\Sigma_{s}d_U(s)=1$).  	
\end{definition}

When each set $U$ in $\U$ has the same size $k$, the probability table is particularly easy to draw as a matrix as we can fix the truth-value assignments by reinterpreting $\{p_{i_0},\ldots,p_{i_{k-1}}\}$, where $i_0<\ldots<i_{k-1}$, as $\{p_0,\ldots,p_{k-1}\}$. With this convention all $U$ have the same truth-value assignments $v_0,\ldots,v_l$. See Figure~\ref{protab5}.

\begin{figure}
\begin{center}
\begin{tabular}{|c|c|c|c|}
\hline
    & $v_0$ &\ldots & $v_l$\\ 
\hline
$U_1$& $a^1_1$&\ldots&$a^1_l$\\
\vdots&\vdots&\vdots&\vdots\\
$U_s$& $a^s_1$&\ldots&$a^s_l$\\
\hline
\end{tabular} 
\end{center}
\caption{Probability table\label{protab5}} 
\end{figure}

\begin{definition}
If $X=(\Omega,\tau)$ is a finite multi-team  of size $n$ and $\U$ is a cover of $B\subseteq\dom(X)$, then the {\em associated probability table for $B$ and $\U$} is the function $U\mapsto d_U$ on $\U$, where $d_U$ is the  probability distribution $$d_U(v)= P_{X,U}(v)$$ on the possible truth-value assignments  for the proposition symbols in $U$.  	 
\end{definition}

	In Table~\ref{prob_table1} we have an example of a probability table for
	$\{p_0,p_1,p_2,p_3\}$ and  $\mathcal{U} = \left\{ \left\{ 0, 1 \right\}, \left\{ 0, 3 \right\}, \left\{ 1, 2 \right\}, \left\{ 2, 3 \right\} \right\}$, associated with the multi-team of Figure~\ref{team1}.

\begin{table}[h]
$$\begin{array}{|c|c|c|c|c|}
\hline
\phantom{a} & (1, 1) & (0,1) & (1,0) & (0,0) \\
\hline
	(p_0, p_1) & 1/2 & 0 & 0 & 1/2 \\
	(p_0, p_3) & 3/8 & 1/8 & 1/8 & 3/8 \\
	(p_1, p_2) & 3/8 & 1/8 & 1/8 & 3/8 \\
	(p_2, p_3) & 3/8 & 1/8 & 1/8 & 3/8 \\
\hline
\end{array}
$$\caption{Probability table \label{prob_table1} associated with Figure~\ref{team1}}\end{table}

	
		


%

\section{Logical Bell Inequalities}

John Stewart Bell showed in 1964 that spins of a pair of entangled particles
manifest correlations which cannot be explained by associating probabilities to spins of the individual particles in different directions, even if so-called ``local" hidden variables are allowed. Bell used the mathematical model of quantum mechanics for his result but the correlations in question have subsequently been verified by experiments. Bell's result is usually interpreted as a strong non-locality of the physical world. On the other hand, this non-locality has given rise to  quantum cryptography and more generally to quantum information theory. 

Abramsky and Hardy \cite{abramsky} presents a very logical formulation of Bell's result and we follow his presentation in this overview section. We present some details for completeness and refer the reader to \cite{abramsky} for further details.

The probability table we use for deriving Bell's Theorem is in  in Table~\ref{prob_table2}.  
	\begin{table}[h]
		$$\begin{array}{|c|c|c|c|c|}
		\hline
		\phantom{a} & (1, 1) & (0,1) & (1,0) & (0,0) \\
		\hline
			(p_0, p_1) & 1/2 & 0 & 0 & 1/2 \\
			(p_0, p_3) & 3/8 & 1/8 & 1/8 & 3/8 \\
			(p_1, p_2) & 3/8 & 1/8 & 1/8 & 3/8 \\
			(p_2, p_3) & 1/8 & 3/8 & 3/8 & 1/8 \\
		\hline
		\end{array}
	$$\caption{Bell's table  \label{prob_table2}}\end{table}
	Consider the Alice-Bob experiment mentioned in the introduction. Let us enrich the framework by imagining that a pair of (entangled) particles are sent to Alice and Bob. Let us decide that what we called Alice's register $A_1$ is actually a measurement of the spin of the particle that Alice has in direction $0^{\circ}$,   Alice's register $A_2$ is  a measurement of the spin of the particle that Alice has in direction $60^{\circ}$,  Bob's register $B_1$ is actually a measurement of the spin of the particle that Bob has in direction $180^{\circ}$, and finally Bob's register $B_2$ is  a measurement of the spin of the particle that Bob has in direction $120^{\circ}$. 
	
	Let us denote\footnote{This is different choice than before in Section~\ref{prob_team}.}
			$$\begin{array}{lcl}
			p_0 &=& \text{``Alice measurement at $0^{\circ}$ has outcome $\uparrow$."},\\
			p_1 &=& \text{``Bob measurement at $180^{\circ}$ has outcome $\uparrow$."},\\
			p_2 &=& \text{``Alice measurement at $60^{\circ}$ has outcome $\uparrow$."},\\
			p_3 &=& \text{``Bob measurement at $120^{\circ}$ has outcome $\uparrow$."},
	\end{array}$$
%
	Both quantum physical computations and actual experiments show that  Table~\ref{prob_table2} is the resulting probability table. However,   Table~\ref{prob_table2} is not the probability table associated with any multi-team. 
%
%
%
%
%
We give the proof, as presented in \cite{abramsky}, for completeness.
The method of  \cite{abramsky} is based on observations about expected values of propositional formulas. 

For this end, suppose $X=(\Omega,\tau)$ a multi-team the domain of which contains the proposition symbols of some given propositional formulas
$(\phi_j)_{j < k}$. Then
	\[ \begin{array}{rcl}
	1 - [\bigwedge_{j < k} \phi_j]_X & = & [\bigvee_{j < k}\neg\phi_j]_X \\
	 					& = & P (\mbox{\Large $\{$ } \!\! i \in X \, | \, \tau(i)(\bigvee_{j < k}\neg\phi_j) = 1 \mbox{\Large $\}$}) \\
	 					& = & P (\bigcup_{j < k} \left\{ i \in X \, | \, \tau(i)(\neg\phi_j) = 1 \right\}) \\
						& \leq & \sum_{j < k} P(\left\{ i \in X \, | \, \tau(i)(\neg \phi_j) = 1 \right\}) \\
	 					& = & \sum_{j < k} [\neg \phi_j]_X \\
	 					& = & \sum_{j < k} (1 - [\phi_j]_X) \\
	 					& = & k - \sum_{j < k} [\phi_j]_X. \\
\end{array}	\]
Hence
	
\begin{equation}\label{star}
 \sum_{j < k} [\phi_j]_X \leq k-1 + [\bigwedge_{j < k} \phi_j]_X. 
  \end{equation}
Furthermore if the formula $\bigwedge_{j < k} \phi_j$ is contradictory (in the sense of propositional logic), then $[\bigwedge_{j < k} \phi_j]_X =0$. Thus, the inequality (\ref{star}) becomes
	\begin{equation}\label{starstar} \sum_{j < k} [\phi_j]_X \leq k-1. \end{equation}
	Inequalities of this form (\ref{starstar}) are of great importance in foundations of quantum mechanics. In \cite{abramsky} they are called {\em logical Bell's inequalities}. 
	
	Suppose now that the probability table represented in Table~\ref{prob_table2} arises from a multi-team. That is, there is a 
	multi-team $X=(\Omega,\tau)$ with $\{p_0,p_1,p_2,p_3\}\subseteq \dom(X)$ such that 
	Table~\ref{prob_table2} 
	is the associated probability table for $\{p_0,p_1,p_2,p_3\}$ and $\U$.
	Consider now the following propositional formulas:
	$$\begin{array}{lcl}
		 \phi_0 &=& (p_0 \wedge p_1) \vee (\neg p_0 \wedge \neg p_1)\\
		\phi_1 &=& (p_0 \wedge p_3) \vee (\neg p_0 \wedge \neg p_3)\\
		\phi_2 &=& (p_1 \wedge p_2) \vee (\neg p_1 \wedge \neg p_2)\\
		\phi_3 &=& (\neg p_2 \wedge p_3) \vee (p_2 \wedge \neg p_3)\\
\end{array}$$

	Looking at Table~\ref{prob_table2} it is easy to notice that $[\phi_0]_X = 1$ and $[\phi_j]_X = \frac{6}{8}$ for $j = 1, 2, 3$. Furthermore,  the formula $\bigwedge_{j < 4} \phi_j$ is clearly contradictory. But then by (\ref{starstar}) we must have that
	\[ \sum_{j < 4}[\phi_j]_X = 1 + 3\cdot\frac{6}{8} = 3 + \frac{1}{4} \leq 3, \]
a contradiction. 

	Thus, Table~\ref{prob_table2} can not arise from a multi-team, because it violates the inequality (\ref{starstar}) by $\frac{1}{4}$. 
One consequence of this, when combined with existing actual measurements, is the remarkable result that
 the polarization of a photon cannot be independently measured
 in two different directions simultaneously.

	It is possible to construct probability tables consistent with quantum mechanics that violate (\ref{starstar}) by $1$, and so achieve {\em maximal} violation of the inequality (remember that the probability of a formula can not be greater than $1$). Tables~\ref{popescu_box} and \ref{GHZ_state} are emblematic examples of this. In \cite{abramsky} and \cite{7961} a general theory of probability tables (and generalizations thereof) is developed. A notion of {\em global section} is introduced and a strict hierarchy of classes of tables is defined: non-local tables, contextual tables and strongly contextual tables. As shown there, the first class corresponds exactly to the family of tables which violate a logical Bell's Inequality, while the third to the family of tables which maximally violate a logical Bell's Inequality. 
	
	\begin{table}[h]
$$\begin{array}{|c|c|c|c|c|}
\hline
\phantom{a} & (1, 1) & (0,1) & (1,0) & (0,0) \\
\hline
	(p_0, p_1) & 1/2 & 0 & 0 & 1/2 \\
	(p_0, p_3) & 1/2 & 0 & 0 & 1/2 \\
	(p_1, p_2) & 1/2 & 0 & 0 & 1/2 \\
	(p_2, p_3) & 0 & 1/2 & 1/2 & 0 \\
\hline
\end{array}
$$\caption{Popescu-Rohrlich box (cfr. \cite{abramsky}) \label{popescu_box}}\end{table}

\begin{table}[h]
$$\begin{array}{|c|c|c|c|c|c|c|c|c|}
\hline
\phantom{a} & 		111 & 110 & 101 & 100 & 011 & 010 & 001 & 000 \\
\hline
	(p_0, p_1, p_4) & 0 & 1/4 & 1/4 & 0 & 1/4 & 0 & 0 & 1/4 \\
	(p_0, p_1, p_5) & 1/4 & 0 & 0 & 1/4 & 0 & 1/4 & 1/4 & 0 \\
	(p_0, p_1, p_5) & 1/4 & 0 & 0 & 1/4 & 0 & 1/4 & 1/4 & 0 \\
	(p_0, p_1, p_4) & 1/4 & 0 & 0 & 1/4 & 0 & 1/4 & 1/4 & 0 \\
\hline
\end{array}
$$\caption{$\mathrm{GHZ}$ state (cfr. \cite{abramsky}) \label{GHZ_state}}\end{table}

	
\section{Quantum Teams}\label{sec_quantum_teams}

A quantum team is a multi-team in which some values are indeterminate, reflecting the situation in quantum phenomena that some variables cannot be measured together. In the quantum theoretic Alice-Bob experiment the truth-values of propositions $p_0$ and $p_2$ (also $p_1$ and $p_3$) cannot be both determined. We isolate this phenomenon by specifying a sequence $\Q=(Q_i)_{i< m}$ of finite sets of proposition symbols. Intuitively, each $Q_i$ is a set of elementary propositions that {\em can} be measured together.

	\begin{definition}[Quantum team]\label{quantum_team} 
Suppose $\Omega$ is a finite set.	Let  $\Q=(Q_i)_{i\in\Omega}$ be a sequence of finite non-empty sets of proposition symbols. A {\em quantum team} on $\Q$ is a pair $X=(\Omega,\tau)$ such that $\tau(i)$ is a truth-value assignment to the proposition symbols in $Q_i$ for each $i\in \Omega$. We call $\{Q_i:i\in\Omega\}$ the {\em support} of $X$ and denote it $\sp(X)$. The set $\bigcup_{i\in\Omega}Q_i$ is called the {\em domain} of $X$ and denoted $\dom(X)$. 
	
\end{definition}

Note that a multi-team is always a quantum team as we can let $Q_i=\dom(X)$ for all $i\in \Omega$. On the other hand, obviously a quantum team need not be a multi-team. 

If $X=(\Omega,\tau)$ is a quantum team and $j\in \dom(X)\setminus Q_i$, then $\tau(i)(j)$ is not determined and we call it {\em indeterminate}. Indetermined values arise in quantum physics naturally. For example, a particle has a spin in every direction, but once it is measured in one direction, spin in other directions cannot be measured independently. In graphical representations of teams we represent indeterminate values using the symbol $-$.

%
	To make clear this convention we give an example of quantum team. Let 
	\[ Q_i = \begin{cases} \left\{ 0, 1 \right\} &\mbox{if } i < 8 \\ 
							\left\{ 0, 3 \right\} &\mbox{if } 8 \leq i < 16 \\ 
							\left\{ 1, 2 \right\} &\mbox{if } 16 \leq i < 24 \\ 
							\left\{ 2, 3 \right\} &\mbox{if } 24 \leq i < 32, 
\end{cases} \]
then Figure~\ref{team2} depicts a $(Q_i)_{i< 32}$-quantum team.
%

{\small
\begin{figure}[h]
$$\begin{array}{|c|c|c|c|c|}
\hline
\phantom{a} & p_0 & p_1 & p_2 & p_3 \\
\hline
	0 & 1 & 1 & - & - \\
	1 & 1 & 1 & - & - \\
	2 & 1 & 1 & - & - \\
	3 & 1 & 1 & - & - \\
	4 & 0 & 0 & - & - \\
	5 & 0 & 0 & - & - \\
	6 & 0 & 0 & - & - \\
	7 & 0 & 0 & - & - \\
	8 & 1 & - & - & 1 \\	
	9 & 1 & - & - & 1 \\	
   10 & 1 & - & - & 1 \\	
   11 & 0 & - & - & 1 \\	
   12 & 1 & - & - & 0 \\	
   13 & 0 & - & - & 0 \\	
   14 & 0 & - & - & 0 \\	
   15 & 0 & - & - & 0 \\	
   16 & - & 1 & 1 & - \\	
   17 & - & 1 & 1 & - \\	
   18 & - & 1 & 1 & - \\	
   19 & - & 0 & 1 & - \\	
   20 & - & 1 & 0 & - \\	
   21 & - & 0 & 0 & - \\	
   22 & - & 0 & 0 & - \\	
   23 & - & 0 & 0 & - \\	
   24 & - & - & 1 & 1 \\	
   25 & - & - & 1 & 0 \\	
   26 & - & - & 1 & 0 \\	
   27 & - & - & 1 & 0 \\	
   28 & - & - & 0 & 1 \\	
   29 & - & - & 0 & 1 \\	
   30 & - & - & 0 & 1 \\
   31 & - & - & 0 & 0 \\				
\hline
\end{array}
$$\caption{Example of a quantum team \label{team2}}\end{figure}
}

Given a finite set $U$ of proposition symbols and a quantum team $(\Omega,\tau)$ on $(Q_i)_{i\in\Omega}$, we let $\Omega_{U} = \left\{ i \in\Omega \, | \, U \subseteq Q_i \right\}$. We use this notation only if $\Omega_U\ne\emptyset$. 

Suppose  $X=(\Omega,\tau)$ is a quantum team on $(Q_i)_{i\in\Omega}$ and 
$\{U\}\le\sp(X)$. We can define a new quantum team $X_U=(\Omega_U,\tau_U)$ by letting $\tau_U(i)=\tau(i)\restriction U$ for $i\in \Omega_U$. For each assignment $v$ on $U$ we define 
$$P_{X,U}(v)=\frac{|\{i\in \Omega_U : \tau_U(i)=v\}|}{|\Omega_U|}.$$ We write $P_{U}$ when $X$ is clear from the context. 
This extends canonically to a definition of the probability of a propositional formula $\phi$ with its proposition symbols in $U$ such that $\Omega_{U} \neq \emptyset$: 
		\[ [\phi]_{X,U} = P_{X,{U}}(\left\{ i \in \Omega_{U} \, | \, \tau_U(i) (\phi) = 1 \right\}) .\]
If $U = \mathrm{Var}(\phi)$ we simply write $[\phi]_{X}$, instead of $[\phi]_{X,\mathrm{Var}(\phi)}$. 

\begin{definition}
Suppose we have a quantum team $X=(\Omega,\tau)$ on $\Q$, a set $B\subseteq\dom(X)$ and a cover $\U$ of $B$ such that $\U\le\Q$. The {\em associated probability table  for $B$ and $\U$} is the following
 function $U\mapsto d_U$ on $\U$:
$$d_U(v)= P_{X,U}(v).$$ 
 \end{definition}

	A moment's reflection shows that Bell's table (i.e. Table~\ref{prob_table2}) is the probability table associated with  the team represented in Figure~\ref{team2}, and $A$ and $\mathcal{U}$ as in the description of Table~\ref{prob_table2}. Similarly, it is possible to see that the Popescu-Rohrlich box (i.e. Table~\ref{popescu_box}) and the $\mathrm{GHZ}$ state (i.e. Table~\ref{GHZ_state}) arise from quantum teams\footnote{In the case of the Popescu-Rohrlich box just modify Table~\ref{prob_table2} changing the first non-indeterminate entry in lines 11, 12, 19, 20, 24 and 31.}.
\medskip

\begin{lemma}
 Every probability table with rational probabilities is the associated table of some quantum team.
\end{lemma}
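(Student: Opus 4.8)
The plan is to construct, for any given probability table, a quantum team whose associated table reproduces it exactly, mimicking the passage from Bell's table (Table~\ref{prob_table2}) to the quantum team of Figure~\ref{team2}. Let the table be given for a finite set $B$ of proposition symbols and a cover $\U=\{U_1,\ldots,U_s\}$, with rational probability distributions $d_{U_j}$ on the truth-value assignments for $U_j$. The key observation is that each $d_{U_j}$ takes only rational values, so by writing all of them over a common denominator we can find a single integer $N$ such that $N\cdot d_{U_j}(v)\in\omega$ for every $j$ and every assignment $v$ on $U_j$. This $N$ is the number of rows we will devote to each set in the cover.

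Concretely, I would build the index set $\Omega$ as a disjoint union of $s$ blocks $\Omega_1,\ldots,\Omega_s$, each of size $N$, and set $Q_i=U_j$ for every index $i$ in the block $\Omega_j$. Within block $\Omega_j$, for each assignment $v$ on $U_j$ I assign the value $\tau(i)=v$ to exactly $N\cdot d_{U_j}(v)$ of the indices; since $\sum_v d_{U_j}(v)=1$ these counts sum to $N$, so the block is exactly filled. Outside $U_j$ the assignment $\tau(i)$ is simply undefined, i.e. indeterminate, which is precisely what the definition of a quantum team permits. This yields a quantum team $X=(\Omega,\tau)$ on the sequence $(Q_i)_{i\in\Omega}$ with support $\sp(X)=\U$.

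It then remains to verify that the associated probability table of $X$ for $B$ and $\U$ is the one we started with. By construction, for a fixed $U_j\in\U$ the only indices $i$ with $U_j\subseteq Q_i$ are exactly those in the block $\Omega_j$, so $\Omega_{U_j}=\Omega_j$ has size $N$. Hence for any assignment $v$ on $U_j$,
\[
P_{X,U_j}(v)=\frac{|\{i\in\Omega_{U_j}:\tau_{U_j}(i)=v\}|}{|\Omega_{U_j}|}=\frac{N\cdot d_{U_j}(v)}{N}=d_{U_j}(v),
\]
which is exactly $d_{U_j}(v)$. Thus the associated table equals the given one, completing the proof.

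I do not expect a genuine obstacle here, as the argument is essentially a clearing-of-denominators construction; the only point requiring care is that different sets $U_j$ in the cover may overlap, so the blocks must be kept index-disjoint (with distinct $Q_i$ governing distinct blocks) to guarantee that $\Omega_{U_j}$ picks out precisely the rows belonging to $U_j$ and no others. Keeping the blocks disjoint is what makes $|\Omega_{U_j}|=N$ hold cleanly and prevents contributions from one distribution leaking into another, and it is also what exploits the extra freedom that quantum teams have over multi-teams.
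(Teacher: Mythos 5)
Your proposal is correct and takes essentially the same route as the paper's own proof: clear denominators, devote to each cover set $U_j$ a disjoint block of rows on which $Q_i = U_j$, fill that block with each assignment $v$ repeated in proportion to $d_{U_j}(v)$, and leave all other entries indeterminate. The only differences are cosmetic---you use a single common denominator $N$ for all blocks where the paper allows a separate denominator $b_j$ per block, and you write out the verification the paper leaves implicit; just note that your claim $\Omega_{U_j}=\Omega_j$ (like the paper's unstated analogue of it) tacitly assumes that no set of the cover is contained in another, which is the implicit hypothesis under which the lemma and both constructions are valid.
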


\begin{proof} Suppose $\mathcal{U} = ( U_i )_{i < n}$ is a cover of a set $B$ of proposition symbols. Let $(d_{U_i})_{i <n}$ be a probability table for $ \mathcal{U}$ and $B$ with rational values.  
Let $s^k_t$, $0\le t<2^{|U_i|}$ list all truth assignments for proposition symbols in $U_i$.
For $i < n$, let $a^i_t\in\omega$ and $b_i\in\omega^*$ be such that $d_{{U}_i}(s^i_t) = {a^i_t}/{b_i}$ for $t<2^{|U_i|}$. Let $m = \sum_{i < n} b_i$. 
For $\sum_{i < k} b_i \leq j < \sum_{i < k+1} b_i$, let $Q_j = U_k$. Let $X=(\Omega,\tau)$ be the  quantum team on $(Q_j)_{j < m}$ such that $X=m$ and for $\sum_{i < k} b_i \leq j < \sum_{i < k+1} b_i$ and $\sum_{p < t} a^k_p \leq j - \sum_{i < k} b_i < \sum_{p < t + 1} a^k_p$ we have that $\tau(j) = s^k_t$. Then $X$ is as desired.
\end{proof}

\section{Probabilistic team logic}


As observed in \cite{abramsky}, Bell's Inequalities can be expressed in terms of expected values of simple propositional formulas. We introduce now a version of propositional logic in which Bell's Inequalities can be expressed and proved. Our approach is based on \cite{fagin}.  Since experiments, as well as theoretical computations, violate Bell's Inequalities, our probabilistic team logic is not appropriate for quantum physics. In the next section we present a new logic, quantum team logic, in which the ``false" Bell's Inequalities are not provable, and which therefore has a better chance to model adequately the logic of quantum phenomena. 

	Following \cite{fagin}, we formulate a logic that is capable of expressing rational inequalities. The syntax and deductive system of this logic are the same as those of \cite{fagin}. The semantics is different, but equiexpressive with the original one, as we shall see. We call this logic {\em probabilistic team logic} $(\mathrm{PTL})$. Paradigm examples of formulas of $\ptl$ are formulas that we write as 
	$$\phi_0+\ldots + \phi_{k-1} \leq k-1,$$ expressing
	the logical Bell's Inequality 
	$$[\phi_0]_X+\ldots + [\phi_{k-1}]_X \leq k-1.$$ 

	\begin{definition} Suppose $\phi_0,\ldots,\phi_k$ are propositional formulas, $(a_j)_{j \le k} \in \mathbb{Z}^k$ and $c \in \mathbb{Z}$, then $$a_0 \phi_0+\ldots +a_{k} \phi_{k} \geq c$$ is an atomic formula of $\mathrm{PTL}$. 		
\end{definition}
		
	\begin{definition}\label{def_form} The set of formulas of $\mathrm{PTL}$ is defined as follows:
		\begin{itemize}
			\item Atomic formulas are formulas;
			\item If $\alpha$ is a formula, then $\neg\alpha$ is a formula;
			\item If $\alpha$ and $\beta$ are formulas, then $\alpha \wedge \beta$ is a formula.
	\end{itemize}		
\end{definition}

	Disjunction and implication are defined in terms of negation and conjunction in the usual manner. We shall use obvious abbreviations, such as $\phi -\psi \geq c$ for $\phi + (-1)\psi \geq c$, $\phi \geq \psi$ for $\phi - \psi \geq 0$, $\phi \leq c$ for $-\phi \geq -c$, $\phi < c$ for $\neg (\phi \geq c)$, $\phi = c$ for $(\phi \geq c) \wedge (\phi \leq c)$ and $\phi = \psi$ for $(\phi \geq \psi) \wedge (\phi \leq \psi)$. A formula such as $\phi \geq \frac{1}{3}$ can be viewed as an abbreviation for $3\phi \geq 1$; we can always allow rational numbers in our formulas as abbreviations for the formula that would be obtained by clearing the denominators.
		
	\begin{definition}[Semantics] Suppose $X=(\Omega,\tau)$ is a multi-team and $\alpha$ a formula of $\mathrm{PTL}$ with  propositional symbols in $\dom(X)$. We define by induction on $\alpha$ the relation $X\models \alpha$ in the following way:
		\begin{itemize}
			\item $X \models a_0 \phi_0+\ldots + a_{k-1} \phi_{k-1} \geq c$ iff $a_0 [\phi_0]_X+\ldots +a_{k-1} [\phi_{k-1}]_X \geq c$;
			\item $X \models \neg\alpha$ iff $X \not\models \alpha$;
			\item $X \models \alpha \wedge \beta$ iff $X \models \alpha$ and $X \models \beta$.
	\end{itemize}		
\end{definition}

	We say that $\alpha$ is {\em satisfiable} if there is a multi-team $X$ such that $X \models \alpha$, and that $\alpha$ is valid, in symbols $\models \alpha$, if $X \models \alpha$ for every multi-team $X$. Notice that the arguments presented in Section~\ref{prob_team} show that for any sequence of propositional formulas $(\phi_0, ..., \phi_{k-1})$ such that $\bigwedge_{j < k} \phi_j$ is unsatisfiable we have that the formula
	\[ \sum_{j < k}\phi_j \leq k-1\]
is a validity of $\mathrm{PTL}$.
In particular, for $\phi_0, \phi_1, \phi_2$ and $\phi_3$ as in Section~\ref{prob_team} we have that the formula
\begin{equation}\label{fs}
 \phi_0+\phi_1+\phi_2+\phi_3 \leq 3
  \end{equation}
is a validity of $\mathrm{PTL}$.

	\begin{definition}[Deductive system]\label{ded_system1} The deductive system of $\mathrm{PTL}$ breaks into the  following three sets of rules.
		\[ \text{Propositional reasoning}\]
		\begin{enumerate}[A)]
			\item All instances of propositional tautologies.
			\item If $\alpha \rightarrow \beta$ and $\alpha$, then $\beta$ (modus ponens).
	\end{enumerate}	
	   \[ \text{Probabilistic reasoning}\]
		\begin{enumerate}[A)]\setcounter{enumi}{2}
			\item $\phi \geq 0$.
			\item $\phi \vee \neg \phi = 1$.
			\item $\phi \wedge \psi + \phi \wedge \neg \psi = \phi$ (additivity).
			\item If $\phi \equiv \psi$ in propositional logic, then $\phi = \psi$.		
	\end{enumerate}
		\[ \text{Linear inequalities}\]
		\begin{enumerate}[A)]\setcounter{enumi}{6}
			\item $\phi \geq \phi$.
			\item $\sum_{j < k} a_j \phi_j \geq c$ $\Leftrightarrow$ $\sum_{j < k} a_j \phi_j + 0 \psi \geq c$.
			\item $\sum_{j < k} a_j \phi_j \geq c$ $\Leftrightarrow$ $\sum_{j < k} a_{\sigma(j)} \phi_{\sigma(j)} \geq c$ (for $\sigma$ permutation on $k$).
			\item $\sum_{j < k} a_j \phi_j \geq c \wedge \sum_{j < k} b_j \phi_j \geq d$ $\Leftrightarrow$ $\sum_{j < k} (a_j + b_j) \phi_j \geq c + d$.
			\item $\sum_{j < k} a_j \phi_j \geq c$ $\Leftrightarrow$ $\sum_{j < k} da_j \phi_j \geq c$ (for $d >0$).
			\item $\sum_{j < k} a_j \phi_j \geq c \vee \sum_{j < k} a_j \phi_j \leq c$.
			\item $\sum_{j < k} a_j \phi_j \geq c$ $\Rightarrow$ $\sum_{j < k} a_j \phi_j > d$ (for $c > d$).
	\end{enumerate}	
\end{definition}

	A deduction is a sequence of formulas $(\alpha_0 , ... , \alpha_{n-1})$ such that each $\alpha_i$ is either an instance of the axioms of our deductive system or follows from one or more formulas of $\left\{ \alpha_0, ... , \alpha_{i-1} \right\}$ by one of its rules. We say that $\alpha$ is provable, in symbols $\vdash \alpha$, if there is a deduction $(\alpha_0 , ... , \alpha_{n-1})$ with $\alpha = \alpha_{n-1}$. We say that $\alpha$ is consistent if $\nvdash \alpha \rightarrow \bot$ and inconsistent otherwise.

	\begin{theorem}[Completeness]\label{compl_PTL} Let $\alpha$ be a formula of $\mathrm{PTL}$. Then
		\[ \vdash \alpha \;\; \Leftrightarrow \;\; \models \alpha. \]	
\end{theorem}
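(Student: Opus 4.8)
The plan is to prove soundness and completeness separately, with the latter following the strategy of Fagin, Halpern and Megiddo \cite{fagin}, whose syntax and deductive system we have adopted verbatim. Soundness is routine: one checks that each axiom of Definition~\ref{ded_system1} is valid in every multi-team and that modus ponens preserves validity. Axioms (C)--(F) hold because $v\mapsto P_{X,U}(v)$ is a genuine probability distribution on the truth assignments, so $[\,\cdot\,]_X$ satisfies nonnegativity, the law of excluded middle, finite additivity, and invariance under propositional equivalence; axioms (G)--(M) are the standard valid principles of linear arithmetic over the reals. The inequalities (\ref{star}) and (\ref{starstar}) derived in Section~\ref{prob_team} are themselves instances of this soundness.

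For completeness I would argue the contrapositive: if $\nvdash\alpha$, then $\neg\alpha$ is consistent, and I must produce a multi-team $X$ with $X\models\neg\alpha$. First, using propositional reasoning (A)--(B) I put $\neg\alpha$ into disjunctive normal form over the atomic formulas; a consistent formula has at least one consistent disjunct $\gamma$, a finite conjunction of atomic formulas $\sum_j a_j\phi_j\geq c$ and of negated atomic formulas, the latter being by definition strict inequalities $\sum_j a_j\phi_j<c$. Let $p_0,\dots,p_{n-1}$ list all proposition symbols occurring in $\gamma$ and let $S$ range over the $2^n$ complete conjunctions (atoms) in these variables. Using additivity (E), excluded middle (D), and propositional equivalence (F), each $[\phi_j]$ can be rewritten as $\sum_{S\models\phi_j} x_S$, where $x_S$ denotes the probability of atom $S$; thus $\gamma$ becomes a finite system of linear inequalities in the variables $(x_S)$, to which we adjoin $x_S\geq 0$ (axiom (C)) and $\sum_S x_S = 1$ (from (D) and (F)).

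The heart of the argument is to show that this linear system has a solution. By linear programming duality --- equivalently, the Farkas lemma together with its Motzkin refinement for strict inequalities --- the system is infeasible over $\mathbb{R}$ exactly when some nonnegative rational combination of its inequalities yields a patent contradiction such as $0\geq c$ with $c>0$, or $0>0$. But the linear-inequality axioms are engineered precisely so that each such combination can be reproduced formally: (H) introduces missing monomials, (I) reorders, (J) adds two inequalities, (K) scales by a positive rational, and (L)--(M) manage the passage between non-strict and strict relations. Hence a certificate of infeasibility would translate into a derivation of $\bot$ from $\gamma$, contradicting the consistency of $\gamma$. Therefore the system is feasible over $\mathbb{R}$; since all coefficients are integers, it is feasible over $\mathbb{Q}$, and we may choose a rational solution $(x_S)$ with $x_S\geq 0$ and $\sum_S x_S=1$.

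Finally, a rational probability distribution on finitely many atoms is realized by a finite multi-team: writing each $x_S=a_S/b$ over a common denominator $b$, take $\Omega$ of size $b$ with exactly $a_S$ indices assigned the atom $S$, exactly as in the rational-realizability lemma proved above. This multi-team $X$ satisfies $\gamma$, hence $X\models\neg\alpha$, so $\not\models\alpha$, completing the contrapositive. I expect the main obstacle to be the duality step and, within it, the careful bookkeeping of strict versus non-strict inequalities --- the reason axioms (L) and (M) are present --- together with verifying that every step of the Farkas combination is mirrored by an allowed syntactic rule. This is precisely the technical core of \cite{fagin}, which our rational multi-team semantics inherits, because feasibility of an integer-coefficient system over $\mathbb{R}$ already guarantees a rational, finite-support solution and hence a multi-team witness.
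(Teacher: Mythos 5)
Your proposal is correct and is essentially the same argument as the paper's: the paper proves this theorem by citing Theorems~2.2 and~2.4 of \cite{fagin} (whose proofs are exactly your DNF-to-linear-system reduction, the Farkas/Motzkin duality step mirrored by axioms G)--M), and the existence of a rational solution) together with the observation that the resulting small model can be made uniform by adding points to the sample space, which is precisely your clearing-of-denominators construction of a multi-team. The steps you flag as the technical core are indeed the content of the cited results, so your write-up is just an explicit unfolding of what the paper invokes by reference.
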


	\begin{proof} It follows from Theorem~2.2 and Theorem~2.4 of \cite{fagin}, by noticing that the {\em small model} of Theorem~2.4 can be taken to be uniform by adding points to the sample space.
	
\end{proof}

In consequence, even the ``false" Bell's Inequalities, such as (\ref{fs}) above that correspond to phenomena that can be violated by quantum mechanical computations as well as by actual experiments, are provable in $\ptl$. Thus $\ptl$ is not the ``right" logic for arguing about probabilities in quantum physics. In the next section a better candidate is introduced. 

\section{Quantum team logic}

	 In this section we generalize $\mathrm{PTL}$ to a more expressive logic: quantum team logic ($\mathrm{QTL}$). The syntax of this logic is more complicated than that of $\mathrm{PTL}$. This modification is necessary in order to account for the fine structure of quantum teams and prove a completeness theorem. Instead of atomic formulas of the form
	 $$a_0 \phi_0+\ldots +a_{k-1} \phi_{k-1}\geq c,$$ as in $\ptl$, we adopt atomic formulas of the more complicated form
	 $$a_0 (\phi_0; V_0)+\ldots +a_{k-1} (\phi_{k-1}; V_{k-1}) \geq c$$ in order to capture the phenomenon, prevalent in quantum physics, that there are limitations as to what observables can be measured simultaneously.

	\begin{definition} Suppose $(\phi_j)_{j\le k}$ are propositional formulas, $(a_j)_{j < k} \in \mathbb{Z}^k$, $c \in \mathbb{Z}$ and $(V_j)_{j < k}$ a sequence of finite sets of proposition symbols, so that the proposition symbols of $\phi_j$ are  in $V_j$ for every $j < k$. Then 
	$$a_0 (\phi_0; V_0)+\ldots +a_{k-1} (\phi_{k-1}; V_{k-1}) \geq c$$ is an atomic formula of $\mathrm{QTL}$. 	
\end{definition}

	\begin{definition}\label{def_quantum_form} The set of formulas of $\mathrm{QTL}$ is defined as follows:
		\begin{itemize}
			\item atomic formulas are formulas;
			\item if $\alpha$ is a formula, then $\neg\alpha$ is a formula;
			\item if $\alpha$ and $\beta$ are formulas, then $\alpha \wedge \beta$ is a formula.
	\end{itemize}		
\end{definition}

	Also in this case we shall use some abbreviations, such as $(\phi; V) - (\psi; V) \geq c$ for $(\phi; V) + (-1)(\psi;V) \geq c$, $(\phi \geq \psi; V)$ for $(\phi; V) - (\psi; V) \geq 0$, $(\phi; V) \leq c$ for $-(\phi; V) \geq -c$, $(\phi; V) < c$ for $\neg ((\phi; V) \geq c)$, $(\phi; V) = c$ for $((\phi; V) \geq c) \wedge ((\phi; V) \leq c)$ and $(\phi = \psi; V)$ for $((\phi \geq \psi; V)) \wedge ((\phi \leq \psi; V))$. Furthermore, if in a formula $\sum_{j < k} a_j (\phi_j; V_j) \geq c$ we have that $V_0 = \cdots = V_{k-1}$, we simply write $(\sum_{j < k} a_j \phi_j; V_0) \geq c$. As for $\mathrm{PTL}$, we will allow rational numbers in our formulas as abbreviations for the formula that would be obtained by clearing the denominators.

	\begin{definition}[Elementary components] Let $\alpha$ be a formula of $\mathrm{QTL}$, we define the elementary components of $\alpha$, in symbols $\mathrm{EC}(\alpha)$, by induction on $\alpha$ in the following way:
		\begin{enumerate}[i)]
			\item $\mathrm{EC}(\sum_{j < k} a_j (\phi_j; V_j) \geq c) = \left\{ (\phi_j; V_j) \, | \, j < k \right\}$;
			\item $\mathrm{EC}(\neg\alpha) = \mathrm{EC}(\alpha)$;
			\item $\mathrm{EC}(\alpha \wedge \beta) = \mathrm{EC}(\alpha) \cup \mathrm{EC}(\beta)$.		
	\end{enumerate}		
\end{definition}

	Given an elementary component $(\phi, V)$, we call $V$ the support of $(\phi, V)$. It makes sense to define this notion for any formula of $\mathrm{QTL}$.

	\begin{definition}[Support] Let $\alpha$ be a formula of $\mathrm{QTL}$, we define the support of $\alpha$, in symbols $\mathrm{Sp}(\alpha)$, by induction on $\alpha$ in the following way:
		\begin{enumerate}[i)]
			\item $\mathrm{Sp}(\sum_{j < k} a_j (\phi_j; V_j) \geq c) = \left\{ V_j \, | \, j < k \right\}$;
			\item $\mathrm{Sp}(\neg\alpha) = \mathrm{Sp}(\alpha)$;
			\item $\mathrm{Sp}(\alpha \wedge \beta) = \mathrm{Sp}(\alpha) \cup \mathrm{Sp}(\beta)$.
	\end{enumerate}		
\end{definition}
	
	We isolate two important classes of formulas of $\mathrm{QTL}$.
		
	\begin{definition} Let $\alpha$ be a formula of $\mathrm{QTL}$ and $((\phi_j; V_j))_{j < k}$ an enumerataion of its elementary components. 
		\begin{enumerate}[i)]
			\item We say that $\alpha$ is {\em classical} if $V_0 = \cdots = V_{k-1}$.
			\item We say that $\alpha$ is {\em normal} if $V_j = \mathrm{Var}(\phi_j)$ for every $j < k$.
 	\end{enumerate}
\end{definition}

	Normal formulas will be denoted omitting supports. Thus, syntactically (not semantically) they look exactly like the formulas of $\mathrm{PTL}$, and will be denoted using the same conventions used there. Classical formulas convey the same semantic content as formulas of $\mathrm{PTL}$, from this their name.
	
	 Given a quantum team $X=(\Omega,\tau)$, we let
		\[ \mathrm{Sp}(X) = \left\{ \mathrm{dom}_X(i) \, | \, i \in\Omega \right\},\]
where with $\mathrm{dom}_X(i)$ we mean the domain of the function $\tau(i)$. We call $\mathrm{Sp}(X)$ the support of $X$. Notice that for any $U \in \mathcal{P}_{\omega}(\omega)$ such that there is $V \in \mathrm{Sp}(X)$ with $U \subseteq V$ we have that $\Omega_V = \left\{ i \in \Omega \, | \, U \subseteq \mathrm{dom}_X(i) \right\} \neq \emptyset$.
	
	\begin{definition}[Semantics] Let $\alpha$ be a formula of $\mathrm{QTL}$ and $X$ a quantum team with $\mathrm{Sp}(\alpha) \leq \mathrm{Sp}(X)$. We define by induction on $\alpha$ the relation $X \models \alpha$ in the following way:
		\begin{itemize}
			\item $X \models \sum_{j < k} a_j (\phi_j; V_j) \geq c$ iff $\sum_{j < k} a_j [\phi_j]_{X,{V_j}} \geq c$;
			\item $X \models \neg\alpha$ iff $X \not\models \alpha$;
			\item $X \models \alpha \wedge \beta$ iff $X \models \alpha$ and $X \models \beta$.
	\end{itemize}		
\end{definition}

	We say that $\alpha$ is satisfiable if there is a quantum team $X$ with $\mathrm{Sp}(\alpha) \leq \mathrm{Sp}(X)$ such that $X \models \alpha$, and that $\alpha$ is valid, in symbols $\models \alpha$, if $X \models \alpha$ for every quantum team $X$ with $\mathrm{Sp}(\alpha) \leq \mathrm{Sp}(X)$. 
	As evident, with respect to normal formulas the only difference between $\mathrm{PTL}$ and $\mathrm{QTL}$ is that the set of teams with respect to which we define the semantics for $\mathrm{QTL}$ is wider than that used for $\mathrm{PTL}$ (remember that multi-teams are particular cases of quantum teams). This allows for the modeling of non-classical phenomena. Notice indeed that in the case of $\mathrm{QTL}$, for $\phi_0, \phi_1, \phi_2$ and $\phi_3$ as in Section~\ref{prob_team} we have that the formula
		
\begin{equation}\label{tag1}
\sum_{j < 4}\phi_j \leq 3  
\end{equation}
is {\em not} a validity of $\mathrm{QTL}$, because the formula
		\[ \sum_{j < 4}\phi_j \geq 3 + \frac{1}{4} \]
is satisfied by the team represented in Figure~\ref{team2}. As a matter of facts, an even stronger negation of (\ref{tag1}) is satisfiable, namely
		\begin{equation}\label{tag2} \sum_{j < 4}\phi_j = 4, \end{equation}
because the team from which the Popescu-Rohrlich box arises satisfies (\ref{tag2}). 
The fact that these formulas are consistent should be no mystery, as indeed  fundamental  laws of probability  presuppose the kinds of classical structures that one
does not find in  $\mathrm{QTL}$,  as the remark below shows.
	
	\begin{remark}\label{failure_add} Let $X$ be the quantum team represented in Table~\ref{team3} and $\alpha$ the following formula:
		\[ p_0 \wedge p_1 + p_0 \wedge \neg p_1 = p_0.\]
Then $X \not\models \alpha$ because 
		\[ [p_0 \wedge p_1]_X + [p_0 \wedge \neg p_1]_X = \frac{1}{2} + \frac{1}{2} \neq \frac{1}{2} = [p_0]_X. \]
\begin{table}[h]
		$$\begin{array}{|c|c|c|c|}
		\hline
	\phantom{a} & p_0 & p_1 & p_3\\
	\hline
				0 & 1 & 1 & - \\
				1 & 1 & 0 & - \\
				2 & 0 & - & 1 \\
				3 & 0 & - & 0 \\	
	\hline
\end{array}
$$\caption{Counterexample to additivity \label{team3}}\end{table}		
\end{remark}

	Remark~\ref{failure_add} also shows that the deductive system described in Definition~\ref{ded_system1} is {\em not} sound with respect to the quantum team semantics given in the present section, because  the additivity axiom (rule E)) is not respected. The following remark shows that also rule F) is violated.
	
		\begin{remark}\label{failure_ruleE} Let $X$ be the quantum team represented in Table~\ref{team4} and let
			\[ \phi = (p_0 \vee \neg p_0) \wedge p_1 \text{ and } \psi = (p_2 \vee \neg p_2) \wedge p_1.\]
	Then clearly $\phi \equiv \psi$ (in propositional logic) but $X \not\models \phi = \psi$ because 
			\[ [\phi]_X = \frac{1}{2} \neq 0 = [\psi]_X. \]
	\begin{table}[h]
		$$\begin{array}{|c|c|c|c|}
		\hline
	\phantom{a} & p_0 & p_1 & p_2 \\
	\hline
			    0 & 1 & 1 & - \\
			    1 & 1 & 0 & - \\
			    2 & - & 0 & 0 \\
			    3 & - & 0 & 0 \\	
	\hline
\end{array}
$$\caption{Counterexample to rule F) \label{team4}}\end{table}		
\end{remark}

	Notice that the teams represented in Remarks~\ref{failure_add} and~\ref{failure_ruleE} are compatible with the thought experiment described in the introduction. As indeed, if we think of $p_0$ and $p_2$ to be the outcome of Alice's measurements, and $p_1$ and $p_3$ to be the outcome of Bob's measurements (for some choice of angles), then the presence of indeterminates\footnote{Remember the definition of indeterminate that we gave after Definition~\ref{quantum_team}. Indeterminates are just entries of the matrix representing the team that are not defined in some rows but that are defined in some others.} in the teams is compatible with the predictions of quantum mechanics (i.e. we can not measure the spins of the same particle at two different angles).
	
	We now come to the deductive system of $\mathrm{QTL}$. At first sight, this system may look a little technical, but it expresses exactly what happens on the semantic side of $\mathrm{QTL}$ (and in fact we will show that it is complete). The system should be thought as a family of localizations of the deductive system of $\mathrm{PTL}$.
	
	\begin{definition}[Deductive system]\label{ded_system2} The deductive system of $\mathrm{QTL}$ is parametrized by finite subsets of $\mathcal{P}_{\omega}(\omega)$. For any $\mathcal{V} \subseteq_{\mathrm{fin}} \mathcal{P}_{\omega}(\omega)$ it breaks into the  following four sets of $\mathcal{V}$-rules, where each one of the formulas involved is such that $\mathrm{Sp}(\alpha) \leq \mathcal{V}$.
		\[ \text{Propositional reasoning}\]
		\begin{enumerate}[A)]
			\item $\vdash_{\mathcal{V}} \alpha$, for $\alpha$ a propositional tautology.
			\item If $\vdash_{\mathcal{V}} \alpha \rightarrow \beta$ and $\vdash_{\mathcal{V}} \alpha$, then $\vdash_{\mathcal{V}} \beta$ (modus ponens).
\end{enumerate}	
	   \[ \text{Probabilistic reasoning}\]
		\begin{enumerate}[A)]\setcounter{enumi}{2}
			\item $\vdash_{\mathcal{V}}(\phi; V) \geq 0$.
			\item $\vdash_{\mathcal{V}}(\phi \vee \neg \phi; V) = 1$.
			\item $\vdash_{\mathcal{V}}(\phi \wedge \psi + \phi \wedge \neg \psi = \phi; V)$ (additivity).
			\item If $\phi \equiv \psi$ in propositional logic, then $\vdash_{\mathcal{V}} (\phi = \psi; V)$.		
	\end{enumerate}
		\[ \text{Linear inequalities}\]
		\begin{enumerate}[A)]\setcounter{enumi}{6}
			\item $\vdash_{\mathcal{V}} (\phi \geq \phi; V)$.
			\item $\vdash_{\mathcal{V}} \sum_{j < k} a_j (\phi_j; V_j) \geq c$ $\Leftrightarrow$ $\vdash_{\mathcal{V}} \sum_{j < k} a_j (\phi_j; V_j) + 0 (\psi; V) \geq c$.
			\item $\vdash_{\mathcal{V}} \sum_{j < k} a_j (\phi_j; V_j) \geq c$ $\Leftrightarrow$ $\vdash_{\mathcal{V}} \sum_{j < k} a_{\sigma(j)} (\phi_{\sigma(j)}; V_{\sigma(j)}) \geq c$ (for $\sigma$ permutation on $k$).
			\item $\vdash_{\mathcal{V}} \sum_{j < k} a_j (\phi_j; V_j) \geq c \wedge \sum_{j < k} b_j (\phi_j; V_j) \geq d$ $\Leftrightarrow$ $\vdash_{\mathcal{V}} \sum_{j < k} (a_j + b_j) (\phi_j; V_j) \geq c + d$.
			\item $\vdash_{\mathcal{V}} \sum_{j < k} a_j (\phi_j; V_j) \geq c$ $\Leftrightarrow$ $\vdash_{\mathcal{V}} \sum_{j < k} da_j (\phi_j; V_j) \geq c$ (for $d >0$).
			\item $\vdash_{\mathcal{V}} \sum_{j < k} a_j (\phi_j; V_j) \geq c \vee \sum_{j < k} a_j (\phi_j; V_j) \leq c$.
			\item $\vdash_{\mathcal{V}} \sum_{j < k} a_j (\phi_j; V_j) \geq c$ $\Rightarrow$ $\vdash_{\mathcal{V}} \sum_{j < k} a_j (\phi_j; V_j) > d$ (for $c > d$).
	\end{enumerate}
	    \[ \text{Change of support}\]
		\begin{enumerate}[A)]\setcounter{enumi}{13}	
			\item $\vdash_{\mathcal{V}} (\phi, V) = 0$ and $V \subseteq V' \in \mathcal{V}$
$\Rightarrow$ $(\phi, V') = 0$.
			\item $\vdash_{\mathcal{V}} (\phi, V) = 1$ and $V \subseteq V' \in \mathcal{V}$
$\Rightarrow$ $(\phi, V') = 1$.
	\end{enumerate}
\end{definition}	

	Let $\mathcal{V} \subseteq_{\mathrm{fin}} \mathcal{P}_{\omega}(\omega)$. A $\mathcal{V}$-deduction is a sequence of formulas $(\alpha_0 , ... , \alpha_{n-1})$ such that $\mathrm{Sp}(\alpha_i) \leq \mathcal{V}$ for every $i < n$ and $\alpha_i$ is either an instance of $\mathcal{V}$-axioms of our deductive system or follows from one or more formulas of $\left\{ \alpha_0, ... , \alpha_{i-1} \right\}$ by one of its $\mathcal{V}$-rules. We say that $\alpha$ is provable, in symbols $\vdash \alpha$, if there is an $\mathrm{Sp}(\alpha)$-deduction $(\alpha_0 , ... , \alpha_{n-1})$ with $\alpha = \alpha_{n-1}$. We say that $\alpha$ is consistent if $\nvdash \alpha \rightarrow \bot$ and inconsistent otherwise.
	
	Before analyzing the problem of completeness of $\mathrm{QTL}$ we notice that axioms G) - M) axiomatize the set of valid inequality formulas. We make this point clear. Based on \cite{fagin}, we define a logical system for linear inequalities, which we call $\mathrm{LinIneq}$. Let $\mathrm{IndVar} = \left\{ v_i \, | \, i \in \omega \right\}$ be a countable set, called the set of individual variables.
	
	\begin{definition} Let $k \in \omega^*$, $(a_j)_{j < k} \in \mathbb{Z}^k$, $c \in \mathbb{Z}$ and $(x_j)_{j<k} \in \mathrm{IndVar}^k$. Then $\sum_{j < k} a_j x_j \geq c$ is an atomic formula of $\mathrm{LinIneq}$. The formulas of $\mathrm{LinIneq}$ are boolean combinations of atomic formulas of $\mathrm{LinIneq}$.
		
\end{definition}

	\begin{definition} Let $f(\vec{x})$ be a formula of $\mathrm{LinIneq}$ with variables from $\vec{x} = (x_0, ..., x_{n-1})$ and $A: \vec{x} \rightarrow \mathbb{R}$. We define by induction on $f$ the relation $A \models f$ in the following way:
		\begin{enumerate}[i)]
			\item $A \models \sum_{j < k} a_j x_j \geq c$ iff $\sum_{j < k} a_j A(x_j) \geq c$;
			\item $A \models \neg f$ iff $X \not\models f$;
			\item $A \models f \wedge g$ iff $X \models f$ and $X \models g$.
	\end{enumerate}		
\end{definition}
	
	\begin{definition}\label{ded_system3} The deductive system of $\mathrm{LinIneq}$ breaks into the two following sets of rules.
	\[ \text{Propositional reasoning}\]
	\begin{enumerate}[a)]
		\item All instances of propositional tautologies.			
		\item If $f \rightarrow g$ and $f$, then $g$ (modus ponens).
		\end{enumerate}	
	\[ \text{Linear inequalities}\]
			\begin{enumerate}[a)]\setcounter{enumi}{2}
				\item $x \geq x$.
				\item $\sum_{j < k} a_j x_j \geq c$ $\Leftrightarrow$ $\sum_{j < k} a_j x_j + 0y \geq c$.
				\item $\sum_{j < k} a_j x_j \geq c$ $\Leftrightarrow$ $\sum_{j < k} a_{\sigma(j)} (x_{\sigma(j)} \geq c$ (for $\sigma$ permutation on $k$).
				\item $\sum_{j < k} a_j x_j \geq c \wedge \sum_{j < k} b_j x_j \geq d$ $\Leftrightarrow$ $\sum_{j < k} (a_j + b_j) x_j \geq c + d$.
				\item $\sum_{j < k} a_j x_j \geq c$ $\Leftrightarrow$ $\sum_{j < k} da_j x_j \geq c$ (for $d >0$).
				\item $\sum_{j < k} a_j x_j \geq c \vee \sum_{j < k} a_j x_j \leq c$.
				\item $\sum_{j < k} a_j x_j \geq c$ $\Rightarrow$ $\sum_{j < k} a_j x_j > d$ (for $c > d$).
	\end{enumerate}				
\end{definition}	

	\begin{lemma} Let $f$ be a formula of $\mathrm{LinIneq}$, then 
		\[ \vdash f \;\; \Leftrightarrow \;\; \models f. \]
\end{lemma}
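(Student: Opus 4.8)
The plan is to prove the two directions separately, with soundness routine and completeness the substantive part. For soundness, $\vdash f \Rightarrow \models f$, I would check that each linear-inequality axiom c)--i) is valid under the real-valued semantics (e.g. c) is $A(x)\geq A(x)$, f) reflects that adding real inequalities is sound, g) that positive scaling is sound, i) that a weak inequality with bound $c$ implies the strict inequality with any smaller bound $d$), and that modus ponens and the biconditionals preserve validity; combined with the completeness of propositional reasoning (a), b)) this gives the easy direction. For completeness, $\models f \Rightarrow \vdash f$, I would argue the contrapositive. Since $\neg f$ is consistent exactly when $\nvdash f$ (because $\neg f \to \bot$ is propositionally equivalent to $f$, and a), b) make provability respect propositional equivalence), it suffices to establish the single statement \emph{every consistent formula of $\mathrm{LinIneq}$ is satisfiable}, and then apply it to $\neg f$: a model of $\neg f$ witnesses $\not\models f$.

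So suppose $g$ is consistent. Using propositional reasoning I would put $g$ into disjunctive normal form $\bigvee_i C_i$, where each $C_i$ is a conjunction of atomic formulas and their negations; if every $C_i$ were inconsistent then $\vdash C_i \to \bot$ for all $i$, hence $\vdash g \to \bot$, contradicting consistency, so some $C_i$ is consistent. Reading each literal $\neg(\sum_j a_j x_j \geq c)$ as the strict inequality $\sum_j a_j x_j < c$, the formula $C_i$ is a finite system of weak and strict linear inequalities with integer coefficients over the reals. It then suffices to show that a consistent such system is satisfiable over $\mathbb{R}$, since any real solution of $C_i$ satisfies $g$.

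The heart of the matter is a linear-algebra fact --- a form of Farkas' Lemma, or equivalently the Motzkin transposition theorem, obtained by Fourier--Motzkin elimination or linear-programming duality, which is exactly the tool used in \cite{fagin} --- asserting that a finite system of weak and strict linear inequalities over $\mathbb{R}$ is unsatisfiable iff some nonnegative rational combination of its members produces a manifestly false inequality, namely $0 \geq c$ with $c > 0$, or $0 > 0$ when strict inequalities are involved. I would prove the needed implication by contraposition: assuming $C_i$ unsatisfiable, extract such a witnessing combination, and then reproduce it inside the deductive system. After clearing denominators, rule g) scales each inequality by a positive integer, rule d) pads all inequalities so that they mention one common tuple of variables, rule e) aligns their order, rule f) sums them, and rule i) manages the passage between weak and strict inequalities; the resulting provable atomic formula is patently refutable, so $C_i$ is inconsistent. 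Contraposition then yields that the consistent $C_i$ is satisfiable, completing the argument.

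The main obstacle I anticipate is precisely this final mirroring step: guaranteeing that the purely syntactic derivation tracks the semantic Farkas witness faithfully, in particular the alignment of variables across distinct atomic formulas (inserting zero coefficients via d) and permuting via e)) and the correct handling of strict versus non-strict inequalities, so that the terminal contradiction is genuinely \emph{derivable} through i) and not merely true. Since axioms a)--i) coincide with the linear-inequality reasoning of \cite{fagin}, the cleanest formal route is to invoke their completeness theorem for this fragment directly; the present lemma is a notational variant of it, and the argument sketched above is the specialization of theirs to the real-valued semantics given here.
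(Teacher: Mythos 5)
Your proposal is correct and ends up exactly where the paper does: the paper's entire proof of this lemma is the citation ``See \cite[Theorem~4.3]{fagin},'' and your final paragraph correctly identifies that invoking that completeness theorem directly is the intended route, since the axioms a)--i) are precisely the linear-inequality fragment of \cite{fagin}. The Farkas/Motzkin-style argument you sketch is a faithful outline of what that cited theorem's proof actually does, so your attempt matches the paper's approach, just with the black box opened.
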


	\begin{proof} See~\cite[Theorem~4.3]{fagin}.
		
\end{proof}

	Given a formula $f(\vec{x})$ of $\mathrm{LinIneq}$, we say that $f$ has a {\em rational solution} if there is $A: \vec{x} \rightarrow \mathbb{R}$ such that $\mathrm{ran}(A) \subseteq \mathbb{Q}$ (i.e. the set of rational numbers).
	
		\begin{lemma}\label{axiomatization_lin_ineq} Let $f$ be a formula of $\mathrm{LinIneq}$. If $f$ is consistent (in $\mathrm{LinIneq}$), then $f$ has a rational solution.
	\end{lemma}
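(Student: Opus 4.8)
The plan is to first pass from the syntactic hypothesis to a semantic one, and then to push a real solution down to a rational one by variable elimination. Since $f$ is consistent we have $\nvdash \neg f$, so the preceding lemma (namely $\vdash g \Leftrightarrow \models g$ for formulas $g$ of $\mathrm{LinIneq}$) gives $\not\models \neg f$; that is, there is some $A \colon \vec{x} \to \mathbb{R}$ with $A \models f$. Writing $f$ in disjunctive normal form and choosing a disjunct $D$ that $A$ satisfies, I reduce to the following situation: $D$ is a finite conjunction of weak inequalities $\sum_{j} a_j x_j \geq c$ and strict inequalities $\sum_{j} a_j x_j > c$ (the strict ones arising from negated atoms, since $\neg(\sum_j a_j x_j \geq c)$ is $\sum_j (-a_j) x_j > -c$), all with integer coefficients, and this system has a real solution. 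It suffices to produce a \emph{rational} solution of $D$, since any solution of $D$ satisfies $f$.

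To obtain rationality I would run Fourier--Motzkin elimination on $D$, eliminating the variables in the order $x_{n-1}, x_{n-2}, \ldots, x_0$. At each step one combines, for every pair of a constraint giving an upper bound and one giving a lower bound on the variable being eliminated, their positive combination; the combination involving a strict inequality is strict, and all coefficients stay rational (indeed integer after clearing denominators). This produces systems $D = D^{(n)}, D^{(n-1)}, \ldots, D^{(0)}$, where $D^{(k)}$ involves only $x_0, \ldots, x_{k-1}$ and has rational coefficients, and $D^{(0)}$ is a conjunction of inequalities between integer constants. The defining property of the elimination is that $D^{(k)}$ holds of $(x_0, \ldots, x_{k-1})$ exactly when some $x_k$ makes $D^{(k+1)}$ hold of $(x_0, \ldots, x_k)$; in particular $D^{(0)}$ is true because $D$ is solvable over $\mathbb{R}$.

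It remains to back-substitute while staying rational. Starting from the true ground system $D^{(0)}$, I would choose rational values $x_0, x_1, \ldots$ in turn: having chosen rationals $x_0, \ldots, x_{k-1}$ satisfying $D^{(k)}$, the constraints that $D^{(k+1)}$ imposes on $x_k$ are finitely many lower and upper bounds (strict or weak) whose endpoints are rational combinations of $x_0, \ldots, x_{k-1}$ and of the integer constants, hence rational. The projection property guarantees that the resulting admissible interval for $x_k$ is nonempty, and a nonempty interval with rational (or infinite) endpoints always contains a rational number, which I take as the value of $x_k$. After $x_{n-1}$ is chosen this yields an assignment $A' \colon \vec{x} \to \mathbb{Q}$ with $A' \models D$, so $A' \models f$ and $\mathrm{ran}(A') \subseteq \mathbb{Q}$, as required.

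The only delicate point — and the step I expect to require the most care — is the uniform bookkeeping of strictness through the elimination: one must verify that the Fourier--Motzkin step tracks strict versus weak inequalities correctly, so that $D^{(k)}$ is genuinely equivalent over $\mathbb{R}$ to the solvability of $D^{(k+1)}$ in $x_k$, and that a nonempty rational-endpoint interval contains a rational even when one or both endpoints are excluded; both are routine but must be checked. Alternatively, one could bypass the explicit elimination entirely by noting that $(\mathbb{Q}, <, +, 0, 1)$ and $(\mathbb{R}, <, +, 0, 1)$ are elementarily equivalent as ordered $\mathbb{Q}$-vector spaces, so that the existential sentence $\exists \vec{x}\, f(\vec{x})$, true in $\mathbb{R}$, is true in $\mathbb{Q}$.
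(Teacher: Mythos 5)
Your proof is correct, but it takes a genuinely different route from the paper, whose entire proof of this lemma is a citation of Theorem~4.9 of Fagin--Halpern--Megiddo. There the result is a small-solution theorem obtained by linear-programming methods: a satisfiable linear-inequality formula has a rational solution whose numerators and denominators are explicitly (polynomially) bounded in the length of the formula. Your argument is instead self-contained: you use the preceding completeness lemma (the paper's citation of Theorem~4.3 of the same reference) to convert consistency into satisfiability over $\mathbb{R}$, pass to a disjunct of a disjunctive normal form to reduce to one finite system of weak and strict inequalities with integer coefficients, and then run Fourier--Motzkin elimination with rational back-substitution. The point you flag as delicate is indeed the only one needing care, and it works: pairwise combinations are strict exactly when one of the two combined inequalities is strict, this characterizes solvability in the eliminated variable, and a nonempty interval with rational or infinite endpoints always contains a rational (a singleton interval has a rational endpoint; otherwise it has nonempty interior and $\mathbb{Q}$ is dense). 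Your closing alternative --- that $(\mathbb{Q},<,+,0,1)$ and $(\mathbb{R},<,+,0,1)$ are elementarily equivalent as models of the complete theory of nontrivial ordered divisible abelian groups with a distinguished positive element, so the existential sentence $\exists\vec{x}\,f(\vec{x})$ transfers from $\mathbb{R}$ to $\mathbb{Q}$ --- is also valid and is arguably the quickest correct route. What the paper's citation buys that your elimination does not is the quantitative bound on the size of the rational solution, which is what yields NP-style complexity bounds and keeps the witnessing team in the QTL completeness argument small; Fourier--Motzkin can blow up doubly exponentially in the number of variables, though it is still effective, so decidability is unaffected either way.
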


	\begin{proof} See~\cite[Theorem~4.9]{fagin}.
\end{proof}

	We now come back to the problem of completeness of $\mathrm{QTL}$.
	
	\begin{theorem}[Completeness]\label{compl_QTL} Let $\alpha$ be a formula of $\mathrm{QTL}$. Then
		\[ \vdash \alpha \;\; \Leftrightarrow \;\; \models \alpha. \]	
\end{theorem}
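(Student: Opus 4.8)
The plan is to prove completeness by the standard soundness-plus-Lindenbaum route, but with the twist that $\mathrm{QTL}$-derivations are localized to a fixed finite support $\mathcal{V}$, so that within that support the system behaves like $\mathrm{PTL}$ relativized to each $V \in \mathcal{V}$. Soundness ($\vdash \alpha \Rightarrow \models \alpha$) should be a routine induction on derivations: I would check that each $\mathcal{V}$-axiom C)--M) holds in every quantum team $X$ with $\mathrm{Sp}(\alpha) \leq \mathrm{Sp}(X)$. The crucial point, already flagged by Remarks~\ref{failure_add} and~\ref{failure_ruleE}, is that axioms E) and F) are stated \emph{inside a single support} $V$ (written $(\dots; V)$), so the counterexamples there do not apply: within a fixed $V$, the restricted team $X_V$ is an ordinary multi-team and the probability laws hold. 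The change-of-support axioms N) and O) are sound because a propositional formula $\phi$ with $\mathrm{Var}(\phi) \subseteq V \subseteq V'$ has $[\phi]_{X,V} = 0$ (resp.\ $1$) exactly when it is false (resp.\ true) on the assignments realized in $\Omega_V$, and passing to the larger index set $\Omega_{V'} \subseteq \Omega_V$ preserves an identically-$0$ or identically-$1$ value.

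For the hard direction I would prove the contrapositive: if $\alpha$ is consistent (i.e.\ $\nvdash \alpha \to \bot$ via an $\mathrm{Sp}(\alpha)$-deduction), then $\alpha$ is satisfiable by some quantum team. Fix $\mathcal{V} = \mathrm{Sp}(\alpha)$ and let $(\phi_i; V_i)_{i<N}$ enumerate $\mathrm{EC}(\alpha)$. The key idea is to \emph{translate} $\alpha$ into a formula $\hat\alpha$ of $\mathrm{LinIneq}$ by introducing, for each support $V \in \mathcal{V}$ and each truth-assignment $s$ on $V$, an individual variable $x_{V,s}$ intended to denote $P_{X,V}(s)$; then each elementary component $(\phi_i; V_i)$ becomes the linear term $\sum_{s \models \phi_i} x_{V_i, s}$. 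One then augments $\hat\alpha$ with the linear side-conditions forced by the probabilistic axioms C)--F) \emph{within each single support}: nonnegativity $x_{V,s} \geq 0$, normalization $\sum_s x_{V,s} = 1$, and the identifications coming from propositional equivalence of assignments on $V$; and one adds the change-of-support constraints dictated by N)--O). The plan is to show that $\alpha$ is consistent in $\mathrm{QTL}$ if and only if this conjunction $\hat\alpha \wedge (\text{side-conditions})$ is consistent in $\mathrm{LinIneq}$, matching $\mathrm{QTL}$-axioms G)--M) with $\mathrm{LinIneq}$-axioms c)--i) term by term.

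Granting that translation, I invoke Lemma~\ref{axiomatization_lin_ineq}: a consistent $\mathrm{LinIneq}$-formula has a \emph{rational} solution $A$. This assigns rational values $A(x_{V,s}) = P_V(s) \in [0,1]$ with $\sum_s P_V(s) = 1$ for each $V \in \mathcal{V}$, i.e.\ exactly a probability table on the cover $\mathcal{V}$ with rational entries, and the change-of-support constraints guarantee these local distributions are mutually compatible wherever N)--O) force them to be. By the Lemma proved just before the quantum-team section (every rational probability table is the associated table of some quantum team), there is a quantum team $X$ on $\mathcal{V}$ realizing these distributions, and by construction $X \models \alpha$.

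\textbf{The main obstacle} I anticipate is the back-and-forth faithfulness of the $\mathrm{QTL}$-to-$\mathrm{LinIneq}$ translation: I must verify that \emph{every} $\mathrm{QTL}$-consistent $\alpha$ maps to a $\mathrm{LinIneq}$-consistent formula and conversely, which amounts to checking that the only probabilistic reasoning available in $\mathrm{QTL}$ beyond pure linear-inequality manipulation is precisely the single-support axioms C)--F) and the change-of-support axioms N)--O)---nothing cross-support sneaks in. In particular I must confirm that no hidden coupling between distributions $P_V$ and $P_{V'}$ for distinct $V, V' \in \mathcal{V}$ is derivable except what N)--O) explicitly permit; this is exactly what makes the false Bell Inequality~(\ref{tag1}) \emph{non}-provable and the table of Figure~\ref{team2} available as a model. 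Carefully isolating this decoupling, and confirming it survives the Lindenbaum/rational-solution extraction, is where the real content of the completeness proof lies.
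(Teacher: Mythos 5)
Your overall architecture --- soundness by induction on derivations, then translating a consistent $\alpha$ into $\mathrm{LinIneq}$ with one variable per pair (support, assignment), adding nonnegativity/normalization/decomposition constraints plus the N)--O) constraints, and extracting a rational solution via Lemma~\ref{axiomatization_lin_ineq} --- is essentially the paper's (the paper packages the side conditions as provable formulas $\beta_{\mathcal{V}}$ and $\gamma_\alpha$, conjoins them to $\alpha$, and passes to a consistent conjunction of literals before reading it as a $\mathrm{LinIneq}$ system). The genuine gap is in your last step. You invoke the lemma at the end of Section~\ref{sec_quantum_teams} to turn the rational solution, viewed as a probability table on the cover $\mathcal{V}=\mathrm{Sp}(\alpha)$, into a quantum team. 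That lemma's construction gives each row a domain equal to exactly one $V \in \mathcal{V}$, one block of rows per $V$; it realizes the prescribed distributions only when the sets in $\mathcal{V}$ are pairwise $\subseteq$-incomparable. In general $\mathrm{Sp}(\alpha)$ contains nested supports $V \subsetneq V'$, and then $\Omega_V$ contains every row of the $V'$-block as well, so $P_{X,V}$ comes out as a weighted mixture of the intended $P_V$ and the marginal of $P_{V'}$, not as $P_V$. (Taken literally for nested covers the cited lemma is even false: no quantum team has associated table with $d_{\{p_0\}}$ concentrated on $p_0=0$ and $d_{\{p_0,p_1\}}$ concentrated on $(1,1)$, since $\emptyset \neq \Omega_{\{p_0,p_1\}} \subseteq \Omega_{\{p_0\}}$.) Your N)--O) side conditions exclude that particular table, but they constrain only probabilities $0$ and $1$; they do not force marginal compatibility --- e.g. $P_V = (1/2,1/2)$ with marginal of $P_{V'}$ equal to $(1/3,2/3)$ passes them --- so no appeal to ``compatibility'' can rescue the block construction.

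Realizing such non-marginal-compatible families by a single team is exactly the paper's core argument, and it is what your proposal is missing. The paper enumerates $\mathcal{V}$ so that $V \subsetneq V'$ implies $V'$ comes first, and glues teams inductively: when support $V_{i+1}$ is added, it is already contained in the domains of $k>0$ existing rows, contributing probabilities $b_s/k$; the target is $a_s/p$; and one adds $k(p-1)$ fresh rows with domain $V_{i+1}$ in which assignment $s$ occurs exactly $a_s k - b_s$ times, so that aggregation gives $(b_s + (a_s k - b_s))/(k + k(p-1)) = a_s/p$ as required. The nontrivial point is $0 \leq a_s k - b_s \leq k(p-1)$, and it is precisely here --- in the extreme cases $a_s = 0$ (one needs $b_s = 0$) and $a_s = p$ (one needs $b_s = k$) --- that the change-of-support axioms N), O), via $\beta_{\mathcal{V}}$ and the inductive hypothesis, are actually used; the intermediate cases $0 < a_s < p$ are automatic. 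So your instinct that N)--O) carry the real content is correct, but their work is done in this gluing construction, not in the faithfulness of the $\mathrm{LinIneq}$ translation (which is routine); without the gluing step the proof does not go through.
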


	\begin{proof} Soundness is easy. Regarding completeness, we show that every consistent formula is satisfiable. Let then $\alpha$ be a consistent formula and $\mathrm{Sp}(\alpha) = \mathcal{V}$. Given $V \in \mathcal{P}_{\omega}({\omega})$ and $s \in 2^V$, we let $\phi_s  = \bigwedge_{v \in V} p_{v}^{s(v)}$. Let
		\[ \beta_{\mathcal{V}}^0 = \bigwedge_{\substack{ V, V' \in \, \mathcal{V} \\ V \subseteq V'}} (\bigwedge_{s \in 2^V}( (\phi_s = 0; V) \rightarrow (\phi_s = 0; V'))),\]
and
		\[ \beta_{\mathcal{V}}^1 = \bigwedge_{\substack{ V, V' \in \, \mathcal{V} \\ V \subseteq V'}} (\bigwedge_{s \in 2^V}( (\phi_s = 1; V) \rightarrow (\phi_s = 1; V'))).\]
		Define $\beta_{\mathcal{V}} = \beta_{\mathcal{V}}^0 \wedge \beta_{\mathcal{V}}^1$. Notice that because of rules N) and O) we have that $\beta_{\mathcal{V}}$ is provable. Let also
				\[ \gamma_{\alpha}^0 = (\bigwedge_{V \in \mathcal{V}} (\sum_{s \in 2^{V}} (\phi_{s}; V) = 1)) \wedge (\bigwedge_{V \in \mathcal{V}} (\bigwedge_{s \in 2^V} ((\phi_{s}; V) \geq 0))), \]
		and
				\[ \gamma_{\alpha}^1 = \bigwedge_{(\phi; V) \in \mathrm{EC}(\alpha)} ((\phi; V) = \sum_{\substack{s \in 2^{\mathrm{Var(\phi)}} \\ \!\!s \models \phi}}(\phi_s; V)).\]
		Define $\gamma_{\alpha} = \gamma_{\alpha}^0 \wedge \gamma_{\alpha}^1$. Notice that also $\gamma_{\alpha}$ is provable, this is because of rule C) and the following lemma.
		
	\begin{lemma} Let $\phi$ be a propositional formulas and $V \in \mathcal{P}_{\omega}(\omega)$ with $\mathrm{Var}(\phi) \subseteq V$. Then the formula
		\begin{equation}\label{starstarstar}
		(\phi; V) = \sum_{\substack{s \in 2^{\mathrm{Var(\phi)}} \\ \!\!s \models \phi}}(\phi_s; V) \end{equation}
is provable in $\mathrm{QTL}$\footnote{Notice that the provability of the first conjunct in $\gamma_{\alpha}^0$ follows from this by taking $\phi = (\bigwedge_{v \in V} p_v) \vee \neg (\bigwedge_{v \in V} p_v)$. In fact from axiom D) and (\ref{starstarstar}) we have that
\[ 1 = ((\bigwedge_{v \in V} p_v) \vee \neg (\bigwedge_{v \in V} p_v)); V) = \sum_{\substack{s \in 2^{\mathrm{Var(\phi)}} \\ \!\!s \models \phi}}(\phi_s; V) = \sum_{s \in 2^{V}} (\phi_{s}; V). \]
}.
\end{lemma}

	\begin{claimproof} It follows from the fact that without the support the formula is provable in $\mathrm{PTL}$, for details see \cite[Lemma~2.3.]{fagin}.
\end{claimproof}

	Let now $\delta = \alpha \wedge \beta_{\mathcal{V}} \wedge \gamma_{\alpha}$. This formula is consistent, because $\alpha$ is consistent by hypothesis, $\beta_{\mathcal{V}}$ and $\gamma_{\alpha}$ are provable, and $\mathrm{Sp}(\beta_{\mathcal{V}}), \mathrm{Sp}(\gamma_{\alpha}) \subseteq \mathcal{V}$. Let $\left\{ \delta_i \, | \, i < l\right\}$ be the set of atoms occurring in $\delta$. Thinking of $\delta$ as a propositional formula in the propositional variables $(\delta_i)_{i < l}$, it is clear that the formula
	\[ \delta \leftrightarrow \bigvee_{\substack{S \in 2^l \\ \!\!S \models \delta}} (\bigwedge_{i < l} \delta_i^{S(i)})\]
is provable in $\mathrm{QTL}$, because $\mathrm{QTL}$ has all the validities of propositional logic in its deductive system. Thus, from the consistency of $\delta$ we can infer the existence of an assignment $S: \left\{ \delta_i \, | \, i < l\right\} \rightarrow 2$ such that $\bigwedge_{i < l}\delta_i^{S(i)}$ is consistent. Let $\delta^{*} = \bigwedge_{i < l}\delta_i^{S(i)}$, for $S$ such an assignment. We show that there is a quantum team $X$ such that $X \models \delta^{*}$. This suffices to establish the satisfiability of $\delta$ in $\mathrm{QTL}$ and thus of $\alpha$. 

	Let $\left\{ x_i \, | \, i < m \right\}$ be the set of elementary components of $\delta^{*}$. Thinking of $\delta^{*}$ as system of linear inequalities in the individual variables $(x_i)_{i < m}$, we have that $\delta^{*}$ is a formula of $\mathrm{LinIneq}$. Because of axioms G) - M), given that $\delta^{*}$ is consistent in $\mathrm{QTL}$ we must have that $\delta^{*}$ is consistent in $\mathrm{LinIneq}$. Thus, by Lemma~\ref{axiomatization_lin_ineq}, we can infer that $\delta^{*}$ has a rational solution. 
	
	Let $(q_i)_{i < e}$ be a rational solution of $\delta^{*}$ (thought as a system of linear inequalities). For any $V \in \mathcal{V}$ we build a multi-team $X(V)$ with domain $V$ following the information encoded in $(q_i)_{i < e}$. Let $V \in \mathcal{V}$, $(s_i)_{i < h}$ an enumeration of the truth assignments to proposition symbols in $V$, wehnever the rational number corresponding to the component $(\phi_s, V)$ is different from $0$, and $(q_{k_i})_{i < h}$ an enumeration of the rational numbers corresponding to $(s_i)_{i < h}$. Notice that because of $\gamma_{\alpha}^0$ the sequence $(q_{k_i})_{i < h}$ can not be empty, all the elements of the sequence are positive and $\sum_{i < h} q_{k_i} = 1$. Let $t \in \omega^*$ be such that $\frac{a_i}{t} = q_{k_i}$ for every $i < h$. We define $X(V)=(\Omega,\tau)$, where $\Omega=t$ and $\tau: t \rightarrow 2^V$ is defined by
	\[ \tau(z) = \begin{cases}  s_0 &\mbox{if } z < a_0 \\ 
							    s_1 &\mbox{if } a_0 \leq z < (a_0 + a_1) - 1 \\ 
							    ... &\mbox{if } ... \\ 
							    s_{h-1} &\mbox{if } \sum_{i < h-1} a_i \leq z < t. 
\end{cases} \]
Notice that for every $(\phi, V) \in \mathrm{EC}(\alpha)$ we have that 
	\[ [\phi]_{X(V),V} = [\phi]_{X(V)} = q, \]
where $q$ is the rational number corresponding to the elementary component $(\phi; V)$. This is because of $\gamma_{\alpha}^1$ and the fact that every $X(V)$ is a multi-team.

	We now linearly order $\mathcal{V}$ satisfying the requirement that if $V \subsetneq V'$ then $V' < V$. Let $(V_0, ..., V_{d-1})$ be the enumeration of $\mathcal{V}$ that follows this order. By induction on $d$, we define quantum teams $(X^i)_{i < d}$, $X^i=(\Omega^i,\tau^i)$, such that for every $j \leq i < d$ and $(\phi, V_{j}) \in \mathrm{EC}(\delta^*)$ we have that
	\begin{equation}\label{tag1s}
	 (\Omega^i)_{V_{j}} = \left\{ z \in \Omega^i \, | \, V_{j} \subseteq \mathrm{dom}_{X^{i}}(z) \right\} \neq \emptyset, \end{equation}
	\begin{equation}\label{tag2s} [\phi]_{X^i,{V_{j}}} = [\phi]_{X(V_{j})}.\end{equation}
Clearly $X^{d-1}$ will be such that $X^{d-1} \models \delta^{*}$.
\newline {\bf Base case).} $X_0 = X(V_0)$. Notice that requirements (\ref{tag1s}) and (\ref{tag2s}) are trivially satisfied.
\newline {\bf Inductive case).} Suppose we have defined $X^{i}$. We are going to define $X^{i+1}$ by gluing $X(V_{i+1})$ to $X^{i}$ without altering probabilities. Let $p$ and $m$ be the number of lines in $X(V_{i+1})$ and $X^{i}$, respectively. There are two cases.
\medskip

\noindent{\textit{Case 1).}} There is no $V \in \mathcal{V}$ such that $V_{i+1} \subsetneq V$. Let $X^{i+1}$ be the team obtained extending $X^i$ with $p$ many lines with domain $V_{i+1}$ and assigning functions in $2^{V_{i+1}}$ according to the values appearing in the rows of $X(V_{i+1})$. By the fact that we extend $X^i$ (and in particular we remove none of the rows of $X^i$), and the fact that we add a strictly positive number of rows with domain $V_{i+1}$ we have that (\ref{tag1s}) is satisfied. Furthermore, for every $s \in 2^V$ we have that
	\[ [\phi_s]_{X^{i+1},{V_{i+1}}} = [\phi_s]_{X(V_{i+1})},\]
and the probabilities of the other supports remain unaltered, and so (\ref{tag2s}) is also satisfied.%
\medskip

\noindent{\textit{Case 2).}} There is at least one $V \in \mathcal{V}$ such that $V_{i+1} \subsetneq V$. Let now $m_{V_{i+1}} = \left\{ j < m \, | \, V_{i+1} \subseteq \mathrm{dom}_{X^{i}}(j) \right\}$ and $|m_{V_{i+1}}| = k$, i.e. the number of lines in $X^{i}$ where the support ${V_{i+1}}$ is defined. Notice that $k > 0$, because for any $V \in \mathcal{V}$ such that $V_{i+1} \subseteq V$ we have that $k \geq |m_V|$, and $m_V \neq \emptyset$ by inductive hypothesis. We extend the quantum team $X^{i}$ with $k(p-1)$ lines with domain $V_{i+1}$\footnote{Of course it is possible that $p = 1$, and so $k(p-1) = 0$. A moment reflection shows that this is not a problem.}. For $s \in 2^{V_{i+1}}$, let 
	\[ [\phi_s]_{X^{i},{V_{i+1}}} = \frac{b_s}{k} \;\;  \text{ and } \;\; [\phi_s]_{X(V_{i+1})} = \frac{a_s}{p}. \]
For every $s \in 2^V$, we let the assignment $s$ appear in $a_sk-b_s$ many of the new lines. 
\medskip

\begin{claim} We claim that this works, namely:
	\begin{enumerate}[i)]
		\item $0 \leq a_sk-b_s \leq k(p-1)$;
		\item $\sum_{s \in 2^V} (a_sk-b_s) = k(p-1)$.
	\end{enumerate}
\end{claim}
\begin{claimproof} Item ii) is easy, because
	\[ \begin{array}{rcl}
	\sum_{s \in 2^V} (a_sk-b_s) & = & \sum_{s \in 2^V} a_sk - \sum_{s \in 2^V} b_s\\
								& = & k\sum_{s \in 2^V} a_s - \sum_{s \in 2^V} b_s\\
								& = & kp - k\\
								& = & k(p-1). 
\end{array}	\]
We verify i). Let $s \in 2^V$, we distinguish three cases.
\medskip

\noindent{\em Case A)} $a_s = 0$. If $a_s = 0$, then $[\phi_s]_{X(V_{i+1})} = 0$, and so the two conjuncts expressing the formula $(\phi_s; V_{i+1}) = 0$ occur in $\delta^{*}$. But then, because of $\beta_{\mathcal{V}}^0$, for every $V \in \mathcal{V}$ such that $V_{i+1} \subsetneq V$ also the two conjuncts expressing the formula $(\phi_s; V) = 0$ occurs in $\delta^{*}$, and so $[\phi_s]_{X(V)} = 0$. Thus, by induction hypothesis, for any such $V$ we have that $[\phi_s]_{X^{i},{V}} = [\phi_s]_{X(V)} = 0$. Hence, $[\phi_s]_{X^{i},{V_{i+1}}} = 0$ because 
	\[m_{V_{i+1}} = \left\{ j < m \, | \, V_{i+1} \subseteq \mathrm{dom}_{X^{i}}(j) \right\} = \bigcup_{\substack{V \in \mathcal{V} \\ V_{i+1} \subsetneq V}} (\left\{ j < m \, | \, V \subseteq \mathrm{dom}_{X^{i}}(j) \right\}),\]
and so 
	\[ \left\{ j \in m_{V_{i+1}} \, | \, X^{i}(j)(\phi_s) = 1 \right\} = \bigcup_{\substack{V \in \mathcal{V} \\ V_{i+1} \subsetneq V}} (\left\{ j \in m_{V} \, | \, X^{i}(j)(\phi_s) = 1 \right\}) = \emptyset.\]
From which it follows that $a_sk-b_s = 0$.
\medskip

\noindent {\em Case B)} $a_s = p$. If $a_s = p$, then $[\phi_s]_{X(V_{i+1})} = 1$, and so the two conjuncts expressing the formula $(\phi_s; V_{i+1}) = 1$ occur in $\delta^{*}$. Thus, reasoning as in the case above we see that because of $\beta_{\mathcal{V}}^1$ we must have that $b_s = k$. From which it follows that $a_sk-b_s = k(p-1)$.
\medskip

\noindent {\em Case C)} $ 0< a_s < p$. Simply notice that
	\[ \begin{array}{rcl}
	 a_s < p & \Rightarrow & a_s + 1 \leq p \\
			 & \Rightarrow & ka_s + k \leq kp \\
		     & \Rightarrow & ka_s \leq k(p-1) \\
		     & \Rightarrow & ka_s - b_s \leq k(p-1),		
\end{array}	\]
and
	\[ \begin{array}{rcl}
	 0 < a_s & \Rightarrow & 1 \leq a_s \\
			 & \Rightarrow & k \leq ka_s \\
		     & \Rightarrow & 0 \leq k -b_s \leq ka_s - b_s. 
\end{array}	\] \end{claimproof}

Let $X^{i+1}$ be the quantum team resulting from the process described above. Requirement (\ref{tag1s}) is satisfied because also in this case we extend $X^i$, and already in $X^{i}$ there are $k > 0$ lines where the support $V_{i+1}$ is defined. Furthermore, for any $s \in 2^V$ we have that
	\[ [\phi_s]_{X^{i+1},{V_{i+1}}} = \frac{b_s + a_sk - b_s}{k + k(p-1)} = \frac{a_s}{p} = [\phi_s]_{X(V_{i+1})},\]
and the probabilities of the other supports remain unaltered, and so (\ref{tag2s}) is also satisfied. This concludes the proof of the theorem.

\end{proof}

Because the size of the team used in the above theorem can be computed from the formula $\alpha$, we get the following corollary:

\begin{corollary} The logic $\mathrm{QTL}$ is decidable.
\end{corollary}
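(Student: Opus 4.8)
The plan is to turn the Completeness Theorem into a decision procedure by observing that the team constructed in its proof has size bounded by a computable function of the input formula. By Theorem~\ref{compl_QTL}, $\vdash \alpha$ iff $\models \alpha$, and $\models \alpha$ fails precisely when $\neg\alpha$ is satisfiable; so it suffices to decide satisfiability of an arbitrary formula $\beta$, which we then apply to $\beta = \neg\alpha$. By soundness a satisfiable formula is consistent, and the proof of Theorem~\ref{compl_QTL} shows conversely that every consistent $\beta$ is satisfied by a quantum team $X$ with $\mathrm{Sp}(X) = \mathrm{Sp}(\beta)$. Hence satisfiability coincides with consistency, and a witnessing team always has support contained in the finite set $\mathrm{Sp}(\beta)$, whose proposition symbols lie in the finite set $\bigcup \mathrm{Sp}(\beta)$.

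First I would read off a size bound from the construction. The multi-teams $X(V)$, for $V \in \mathcal{V} = \mathrm{Sp}(\beta)$, have size $t_V$ equal to a common denominator of the rational solution of $\delta^{*}$ (viewed as a system of linear inequalities), and each gluing step enlarges the running size $m$ by a controlled amount: in Case~1 by adding $t_V$ lines, and in Case~2 to $m + k(t_V - 1) \leq m\, t_V$ (since $k \leq m$). Hence each step multiplies the size by at most $t_V$ up to an additive term, and the final team $X^{d-1}$ has $|\Omega|$ bounded by a computable function of the $t_V$, say $\prod_{V \in \mathcal{V}} (t_V + 1)$.

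The main obstacle is to bound the $t_V$. For this I would strengthen Lemma~\ref{axiomatization_lin_ineq} to its quantitative form: a consistent formula $f$ of $\mathrm{LinIneq}$ has a rational solution whose numerators and denominators are bounded by a computable function of the integer coefficients and constants appearing in $f$. This is the standard small-solution phenomenon for finite systems of linear inequalities with integer data --- a satisfiable system has a basic solution given by Cramer's rule, whose entries are ratios of subdeterminants of the coefficient matrix and are therefore controlled by those coefficients. All coefficients occurring in $\delta^{*} = \alpha \wedge \beta_{\mathcal{V}} \wedge \gamma_{\alpha}$ (after passing to the conjunctive term $\delta^{*}$ selected in the completeness proof) are computable from $\beta$; hence so is the denominator bound, and therefore so is $N = N(\beta) := \prod_{V \in \mathcal{V}} (t_V + 1)$.

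It then remains to run a finite search. There are only finitely many quantum teams $X$ with $\mathrm{Sp}(\beta) \leq \mathrm{Sp}(X)$, $\mathrm{Sp}(X) \subseteq \mathcal{P}_{\omega}(\bigcup \mathrm{Sp}(\beta))$ and $|\Omega| \leq N(\beta)$, and each is a finite object that can be effectively listed. For every such $X$ the numbers $[\phi_j]_{X,V_j}$ attached to the elementary components of $\beta$ are rationals that can be computed exactly, so $X \models \beta$ is decided by a finite rational computation. If some listed $X$ satisfies $\beta$, then $\beta$ is satisfiable; if none does, then by the size bound $\beta$ is unsatisfiable. Taking $\beta = \neg\alpha$ decides $\models\alpha$, hence $\vdash\alpha$, so the logic is decidable. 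Alternatively, and more softly, decidability already follows from Theorem~\ref{compl_QTL} because $\{\alpha : \vdash\alpha\}$ is recursively enumerable by enumerating proofs while its complement is recursively enumerable by enumerating finite quantum teams and checking satisfaction of $\neg\alpha$; the explicit bound above additionally yields a concrete complexity estimate.
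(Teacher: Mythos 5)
Your proposal is correct and follows essentially the same route as the paper, whose entire proof is the remark that the size of the satisfying team constructed in the Completeness Theorem is computable from the formula, so that deciding $\vdash\alpha$ reduces to a finite search over teams of bounded size; your write-up just makes explicit the ingredients the paper leaves implicit (the quantitative small-solution bound for the linear system $\delta^{*}$, the multiplicative size growth in the gluing steps, and the restriction of the search to supports over $\mathrm{Sp}(\neg\alpha)$). The alternative r.e.\ plus co-r.e.\ argument you add at the end is also valid, since all quantum teams are finite and model checking is effective, but it is a softer observation than what the paper intends.
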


\begin{remark}
In his famous {\em Lectures on Physics} \cite{MR0213079}  Richard Feynman explains to the students the ``Double-Slit Experiment":
Electrons are accelerated toward  a thin metal plate with two holes (hole 1 and hole 2) in it. Beyond the wall is another plate with a movable detector in front of it so that we can record an empirical probability distribution for the spot where the particle hits the second plate. It turns out that even when particles are sent one by one, the distribution has interference as if the particles went through both holes in a  wave-like fashion.
Feynman proposes the question whether the following proposition is true or false:
\smallskip

\begin{quote}
 Proposition A:	Each electron either goes through hole 1 or it goes through hole 2.
\end{quote} 
\smallskip

\noindent To test this proposition, Feynman puts in this hypothetical experiment  a light source near each hole so that we can observe an electron passing through that hole from the flash it gives as it scatters the light. It turns out that the probability distribution of  the spot where the particle hits the second plate has now no interference as if each particle indeed went
through hole 1 or through hole 2.  Feynman comments this as follows:
\smallskip

\begin{quote}
``Well," you say, ``what about Proposition A? Is it true, or is it not true, that the electron either goes through hole 1 or it goes through hole 2?" 
\end{quote}
\smallskip

\noindent We  use quantum team logic to model this riddle. Let us adopt the following notation for the basic propositions of this situation:
\begin{itemize}
\item $p_0$ = "we see the flash at hole 1".
\item  $p_1$ = "we see the flash at hole 2".
\item  $q_i $ = "the detector got the electron at distance $i$ from the center", for $i \in \mathbb{Z}$.
\end{itemize}
Let $\phi$ be the sentence $(p_0 \vee p_1) \wedge \neg (p_0 \wedge p_1)$. This sentence is Feynman's Proposition A. It is easy to construct a quantum team $X$ so that:

\begin{enumerate}
\item  $X \models \phi = 1$.
\item  $X \models (\phi\wedge q_i) \neq q_i.$
\end{enumerate}

\noindent Thus in this quantum team, even though the probability of $\phi$ is $1$, the probability of $\phi\wedge q_i$ is different from the probability of $q_i$. Although this defies intuition about probabilities, in quantum team logic it is just a feature, not a paradox.

\end{remark}

\begin{remark}
We have defined the semantics of our quantum team logic with reference to arbitrary quantum teams, whether they arise from actual quantum mechanical considerations or not. As the maximal violations of Bell's Inequality show, there are quantum teams that do not correspond to any actual experiments in quantum mechanics. This raises the following question:

\begin{quote}{\bf Open Question:} Can we axiomatize completely the formulas of quantum
team logic that are valid in quantum teams that correspond to quantum mechanical experiments? In particular, is that set of formulas recursive?
\end{quote} 
\end{remark}

\section{Conclusion}
	
	We introduced two new families of teams: multi-teams and quantum teams. The first family of teams models a notion of experiment which is compatible with classical mechanics but does not account for the predictions of quantum mechanics and experimental verifications thereof. The second family of teams is wider than the first and accounts for the non-locality phenomena which are typical of quantum mechanics. Based on these families of teams, we formulated two new logics: probabilistic team logic ($\mathrm{PTL}$) and quantum team logic $(\mathrm{QTL})$. $\mathrm{PTL}$ is only an adaptation of the system presented in \cite{fagin} to the framework of team semantics, while $\mathrm{QTL}$ is an original system, which we think appropriate for a logical analysis of the thought experiments considered in the foundations of quantum physics and the relative probability tables. The language of $\mathrm{QTL}$ is built up from rational inequalities, and the non-classical nature of  quantum teams allows for the satisfiability of rational inequalities expressing violations of Bell's Inequalities. Finally, we devised a deductive system for $\mathrm{QTL}$ and showed that this system is complete with respect to the intended semantics, making the logical treatment of the subject complete.

\end{document}